\newtheorem*{maintheorem*}{Main Theorem}
\newtheorem{theorem}{Theorem}[section]
\newtheorem{prop}[theorem]{Proposition}
\newtheorem{question}[theorem]{Question}
\newtheorem{conj}[theorem]{Conjecture}
\newtheorem{cor}[theorem]{Corollary}
\theoremstyle{definition}
\newtheorem{definition}[theorem]{Definition}
\newtheorem{remark}[theorem]{Remark}
\newtheorem{example}[theorem]{Example}
\numberwithin{equation}{section}
\newcommand{\nn}{\mathbb{N}}
\newcommand{\qq}{\mathbb{Q}}
\newcommand{\rr}{\mathbb{R}}
\newcommand{\zz}{\mathbb{Z}}
\newcommand{\gp}{\text{gp}}
\newcommand{\uu}{\mathcal{U}}
\providecommand\ldb{\llbracket}
\providecommand\rdb{\rrbracket}
\keywords{length-factoriality, factorization, atomicity, unique factorization, other-half-factoriality, pure irreducible, semidomain, finite-rank monoid, monoid algebra}
\subjclass[2020]{Primary: 13F15, 13A05; Secondary: 16Y60}
\begin{document}

\newcommand\doctitle{Length-Factoriality and Pure Irreducibility}
	
\begin{center}
    \slshape
    This is an Accepted Manuscript of an article

    published by
    Taylor~\&~Francis in Communications in Algebra on 20 Mar 2023,

    available at:
    \url{https://www.tandfonline.com/doi/10.1080/00927872.2023.2187629}
\end{center}

\title{\doctitle}

\author{Alan Bu$^1$}
\address{$^1$Phillips Exeter Academy\\Exeter, NH 03833}

\author{Joseph Vulakh$^2$}
\address{$^2$Paul Laurence Dunbar High School\\Lexington, KY 40513. joseph@vulakh.us}

\author{Alex Zhao$^3$}
\address{$^3$Lakeside School\\Seattle, WA 98125}

\begin{abstract}
	 An atomic monoid $M$ is called length-factorial if for every non-invertible element $x \in M$, no two distinct factorizations of $x$ into irreducibles have the same length (i.e., number of irreducible factors, counting repetitions). The notion of length-factoriality was introduced by J. Coykendall and W. Smith in 2011 under the term ``other-half-factoriality": they used length-factoriality to provide a characterization of unique factorization domains. In this paper, we study length-factoriality in the more general context of commutative, cancellative monoids. In addition, we study factorization properties related to length-factoriality, namely, the PLS property (recently introduced by Chapman et al.) and bi-length-factoriality in the context of semirings.
\end{abstract}
\medskip

\maketitle

\pagebreak
\section{Introduction}
\label{sec:intro}

A (multiplicative) cancellative commutative monoid $M$ is called atomic if every non-invertible element of $M$ factors into atoms (i.e., irreducible elements), and an integral domain is atomic if its multiplicative monoid is atomic. Every submonoid of a free commutative monoid is atomic, although some elements may have multiple factorizations into atoms. It follows from the definitions that UFDs are atomic domains. In addition, Noetherian domains and Krull domains are well-studied classes of atomic domains that may not be UFDs. The phenomenon of multiple factorizations in atomic monoids and domains has received a great deal of investigation during the last three decades (see \cite{GZ20} and~\cite{AG22} for recent surveys). In this paper, we study atomicity and the phenomenon of multiple factorizations in the settings of monoids, integral domains, and semidomains, putting special emphasis on the property of length-factoriality and on certain special types of atoms, called \emph{pure atoms} by Chapman et al. in~\cite{CCGS21}, that appear in length-factorial monoids.
\smallskip

An atomic monoid $M$ is called a length-factorial monoid provided that no two distinct factorizations of the same non-invertible element of $M$ have the same length (i.e., number of irreducible factors, counting repetitions). The notion of length-factoriality was introduced and first studied in 2011 by Coykendall and Smith in~\cite{CS11} under the term ``other-half-factoriality". In their paper, they proved that an integral domain is length-factorial if and only if it is a UFD. By contrast, it is well known that there are abundant classes of length-factorial monoids that do not satisfy the unique factorization property. More recent studies of length-factoriality have been carried out by Chapman et al. in~\cite{CCGS21} and by Geroldinger and Zhong in~\cite{GZ21}. In Section~\ref{sec:monoids}, we discuss length-factoriality in the context of monoids, proving that every length-factorial monoid $M$ is an FFM (i.e., $M$ is atomic and every non-invertible element of $M$ has only finitely many divisors up to associates). We conclude Section~\ref{sec:monoids} by investigating the structure of LFMs and the number of pure atoms that can appear in an atomic monoid, showing that for any prescribed pair $(m,n) \in \nn^2$, there is an atomic monoid having precisely $m$ purely long atoms and $n$ purely short atoms.
\smallskip

An atom $a_1$ of an atomic monoid $M$ is called purely long if the fact that $a_1 \cdots a_\ell = a'_1 \cdots a'_m$ for some atoms $a_2, \dots, a_\ell, a'_1, \dots, a'_m$ of~$M$ with $a_i$ and $a_j'$ not associates for any $i$, $j$, implies that $\ell > m$. A purely short atom is defined similarly. It turns out that every length-factorial monoid that does not satisfy the unique factorization property has both purely long and purely short atoms \cite[Corollary~4.6]{CCGS21}. On the other hand, it was proved in~\cite[Theorem~6.4]{CCGS21} that an integral domain cannot contain purely long and purely short atoms simultaneously and, in addition, the authors gave examples of Dedekind domains having purely long (resp., purely short) atoms, but not purely short (resp., purely long) atoms. In Section~\ref{sec:monoid domains}, we prove that monoid algebras with rational exponents have neither purely long nor purely short atoms.
\smallskip

Section~\ref{sec:semidomains} is devoted to presenting a still unanswered question about length-factoriality and pure irreducibility in the setting of semidomains. A semidomain is an additive submonoid of an integral domain that is closed under multiplication (unlike subrings, in a semidomain, some elements may not have additive inverses). It is still not known whether $\nn_0$ is the only semidomain with both its additive and its multiplicative monoids being length-factorial. To assist in the study of length-factoriality in semidomains, we extend to the context of semidomains the result of~\cite{CCGS21} that no integral domain has both purely long and purely short atoms.

\bigskip
\section{Background}
\label{sec:background}

In this section, we introduce some terminology and definitions related to the atomicity of cancellative commutative monoids. We use $\mathbb{N}$ and $\mathbb{N}_0$ to denote the sets of positive and non-negative integers, respectively. We denote the set of rational and real numbers by $\mathbb{Q}$ and $\mathbb{R}$, respectively.  
For $a,b \in \zz$, we denote the set of integers between $a$ and $b$, inclusive, as follows: \begin{equation*}
    \llbracket a, b \rrbracket := \{n \in \mathbb{Z} \mid a \leq n \leq b\}.
\end{equation*}

Over the course of this paper, we tacitly assume that the term \textit{monoid} will always mean cancellative commutative monoid. Let $M$ be a monoid. Since $M$ is assumed to be commutative, we will use additive notation, where ``$+$" denotes the operation of $M$ and~$0$ denotes the identity element. We call the invertible elements of $M$ \textit{units}, and the set of all units of $M$ is denoted by $\mathcal{U}(M)$. The monoid~$M$ is called \textit{reduced} if $\mathcal{U}(M) = \{0\}$.  If $M$ is generated by a set $S$, then we write $M = \langle S \rangle$.

The \emph{quotient group} or the \emph{difference group} of $M$ is the set of differences of elements of $M$ (that is, the unique, up to isomorphism, abelian group $\gp(M)$ satisfying that every abelian group containing a copy of $M$ must also contain a copy of $\gp(M)$) \cite[pp. 5--6]{rG84}. The monoid $M$ is called \emph{torsion-free} if, for any $n \in \nn$ and $x, y \in M$, the equality $nx = ny$ implies that $x = y$. One can easily show that $M$ is torsion-free if and only if $\gp(M)$ is a torsion-free abelian group. By definition, the \emph{rank} of $M$ is the rank of $\gp(M)$ as a $\zz$-module, that is, the dimension of the vector space $\qq \otimes_\zz \gp(M)$.

For $x, y \in M$, the element $y$ \emph{divides} $x$ in $M$, denoted $y \mid_M x$, if there exists an element $z \in M$ such that $x = y + z$. We will denote such a divisibility relation by just $y \mid x$ if the monoid is clear from the context. Two elements $x$ and $y$ of $M$ are \emph{associates} if each of $x$ and $y$ divides the other. A subset $S$ of $M$ is \emph{divisor-closed} if $S$ contains all divisors in $M$ of all elements of $S$.

The monoid $M$ is called a \textit{numerical monoid} if $M$ is an additive submonoid of $\mathbb{N}_0$ and $\mathbb{N}_0 \setminus M$ is finite. Numerical monoids have been study for many decades motivated in part by their connection with the Frobenius coin problem. A \textit{Puiseux monoid} is an additive submonoid of $\mathbb{Q}_{\geq 0}$. Puiseux monoids, being a crucial playground for counterexamples in factorization theory and commutative algebra (via monoid domains), have been actively investigated for the last few years (see \cite{GGT21} and references therein).

An element $a \in M \setminus \mathcal{U}(M)$ is called an \textit{atom} (or an \textit{irreducible}) if $a = x + y$ for some $x,y \in M$ implies that either $x$ or $y$ is a unit of $M$. We denote the set of atoms of $M$ by $\mathcal{A}(M)$. An element $p \in M \setminus \mathcal{U}(M)$ is \emph{prime} if $p \mid x + y$ for some $x, y \in M$ implies that $p \mid x$ or $p \mid y$. The monoid $M$ is \textit{atomic} if every non-unit element of $M$ is a sum of atoms, or in other words, $M = \langle \mathcal{A}(M) \rangle$. This definition was introduced by P. Cohn~\cite{pC68} in the context of integral domains. Observe that a monoid $M$ is atomic if and only if the quotient $M_{\text{red}} := M / \mathcal{U}(M)$ is atomic.

For an atomic monoid $M$, let $\mathsf{Z}(M)$ denote the monoid of formal sums of the elements of $\mathcal{A}(M_{\text{red}})$, and let $\pi_M \colon \mathsf{Z}(M) \to M_{\text{red}}$ be the unique monoid homomorphism such that $\pi_M(a) = a$ for every $a \in \mathcal{A}(M_{\text{red}})$. We will write $\pi$ instead of $\pi_M$ when the monoid is clear from context. For each $z = a_1 + a_2 + \cdots + a_{\ell} \in \mathsf{Z}(M)$, the \emph{length} $\ell$ of $z$ is denoted $|z|$. For every $x \in M$, we define \begin{equation*}
    \mathsf{Z}(x) := \mathsf{Z}_M(x) := \pi^{-1}(x + \mathcal{U}(M)),
\end{equation*}
that is, the set of factorizations of $x$, and \begin{equation*}
    \mathsf{L}(x) := \mathsf{L}_M(x) := \{|z| \mid z \in \mathsf{Z}(x)\}.
\end{equation*}

A pair $(z_1, z_2)$ of factorizations in $\mathsf{Z}(M)$ of the same element of $M$ is called a \emph{factorization relation}, and is said to be \emph{irredundant} if no atom appears in both $z_1$ and $z_2$. In addition, the factorization relation is said to be \emph{balanced} if $|z_1| = |z_2|$, and \emph{unbalanced} otherwise. A factorization relation $(z_1, z_2)$ is called a \emph{master factorization relation} if every irredundant, unbalanced factorization relation of $M$ is of the form $(nz_1, nz_2)$ or $(nz_2, nz_1)$, with $n \in \nn$. Master factorization relations were introduced in the context of integral domains in~\cite{CS11} to study length-factoriality, and the same notion was later generalized to monoids in~\cite{CCGS21}.

If $M$ is atomic and $|\mathsf{Z}(x)| < \infty$ for all $x \in M$, then $M$ is called a \textit{finite factorization monoid}, or an FFM~\cite{AAZ90}. If every element of $M$ has only a finite number of atomic divisors up to associates, then $M$ is called an \emph{idf-monoid}. By \cite[Theorem~5.1]{AAZ90}, the multiplicative monoid of an integral domain is an FFM if and only if it is an atomic idf-monoid; their proof easily extends to all monoids. If $M$ is atomic and $|\mathsf{L}(x)| < \infty$ for all $x \in M$, then the monoid $M$ is called a \textit{bounded factorization monoid}, or a BFM~\cite{AAZ90}. If $|\mathsf{Z}(x)| = 1$ for all $x \in M$, then the monoid $M$ is called a \textit{unique factorization monoid}, or a UFM. If $|\mathsf{L}(x)| = 1$ for all $x \in M$, then the monoid $M$ is called a \textit{half-factorial monoid}, or HFM~\cite{aZ76}. It follows directly from the definitions that every UFM is both an FFM and and an HFM, and also that if a monoid is either an FFM or an HFM, then it must be a BFM.

The monoid $M$ is called a \emph{length-factorial monoid}, or an LFM, if it is atomic and for every nonunit $x$ of $M$ and factorizations $z_1, z_2 \in \mathsf{Z}(x)$, the equality $|z_1| = |z_2|$ implies that $z_1 = z_2$. It follows directly from the definitions that every UFM is an LFM. A monoid is a \emph{proper} LFM if it is an LFM, but not a UFM. An atom $a$ of $M$ is said to be \emph{purely long} (resp., \emph{purely short}) if it is not prime and for every irredundant factorization relation $(z_1, z_2)$, the inclusion $a \in z_1$ implies $|z_1| > |z_2|$ (resp., $|z_1| < |z_2|$). Following \cite{CCGS21}, we let $\mathcal{L}(M)$ (resp., $\mathcal{S}(M)$) denote the set of purely long (resp., purely short) atoms of $M_{\text{red}}$. Elements of $\mathcal{L}(M) \cup \mathcal{S}(M)$ are called \emph{pure} atoms. The monoid $M$ satisfies the \emph{PLS property} and is a \emph{PLS monoid}, or a PLSM, if both sets $\mathcal{L}(M)$ and $\mathcal{S}(M)$ are nonempty. Length-factoriality and pure atoms were recently studied in~\cite{CCGS21}. We proceed to list some of the established results for easy reference.

\begin{prop}[{\cite[Corollary~3.2]{CCGS21}}]
    An atomic monoid $M$ is a proper LFM if and only if it admits an unbalanced master factorization relation $(z_1, z_2)$, in which case the only master factorizations are $(z_1, z_2)$ and $(z_2, z_1)$.
    \label{prop:master_relation}
\end{prop}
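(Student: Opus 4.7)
My plan is to reformulate length-factoriality via the subgroup of relations inside the free abelian group $G := \gp(\mathsf{Z}(M))$, and extract from it a canonical generator that will serve as the master. Define
\[
K := \{z - z' \in G : z, z' \in \mathsf{Z}(M) \text{ and } \pi(z) = \pi(z')\},
\]
a subgroup of $G$. The length map $|\cdot| \colon G \to \zz$ restricts to a homomorphism on $K$, and the key observation is that $M$ is length-factorial if and only if $|\cdot|$ is injective on $K$: if $k = z - z' \in K$ has $|k| = 0$ then $|z| = |z'|$ and $\pi(z) = \pi(z')$, so length-factoriality forces $z = z'$ and $k = 0$, while the converse is direct. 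Moreover, $M$ is a proper LFM if and only if $K \neq \{0\}$.

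For the forward direction, assume $M$ is a proper LFM, so $|\cdot|(K)$ is a nontrivial cyclic subgroup of $\zz$ and $K$ is infinite cyclic. Pick a generator $k_0$ with $|k_0| > 0$ and decompose $k_0 = z_1 - z_2$ canonically in $G$ with $z_1, z_2 \in \mathsf{Z}(M)$ having disjoint supports; by construction $(z_1, z_2)$ is then irredundant and unbalanced. To see that $(z_1, z_2)$ is a factorization relation, note that $k_0 \in K$ furnishes $z, z' \in \mathsf{Z}(M)$ with $z - z' = z_1 - z_2$ and $\pi(z) = \pi(z')$; this rearranges in the free monoid to $z + z_2 = z' + z_1$, whose image under $\pi$ together with cancellativity yields $\pi(z_1) = \pi(z_2)$. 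For any irredundant unbalanced $(w_1, w_2)$, the element $w_1 - w_2 \in K \setminus \{0\}$ equals $n k_0$ for some $n \in \zz \setminus \{0\}$; disjoint supports let us read off positive and negative parts to conclude $(w_1, w_2) = n(z_1, z_2)$ if $n > 0$ and $(w_1, w_2) = (-n)(z_2, z_1)$ if $n < 0$. Hence $(z_1, z_2)$ is an unbalanced master.

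For the reverse direction, suppose $(z_1, z_2)$ is an unbalanced master. Applying the master property to the irredundant form of $(z_1, z_2)$ itself forces $(z_1, z_2)$ to already be irredundant, and since $z_1 \neq z_2$ but $\pi(z_1) = \pi(z_2)$, $M$ is not a UFM. To prove length-factoriality, let $z, z' \in \mathsf{Z}(x)$ with $|z| = |z'|$; cancelling $d := \gcd(z, z')$ produces a balanced irredundant relation $(u, v)$, and it suffices to show $u = v$. Suppose for contradiction that $u \neq v$. Then $(z_1 + v, z_2 + u)$ is an unbalanced factorization relation whose irredundant form $(w_1, w_2)$ satisfies $w_1 - w_2 = (z_1 - z_2) + (v - u)$. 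The master property yields $w_1 - w_2 = \varepsilon n (z_1 - z_2)$ for some $\varepsilon \in \{\pm 1\}$ and $n \in \nn$, hence $v - u = (\varepsilon n - 1)(z_1 - z_2)$. A short case split on the sign of $\varepsilon n - 1$---matching positive and negative parts using disjoint supports and combining $|u| = |v|$ with $|z_1| \neq |z_2|$---forces either $u = v$ or $|z_1| = |z_2|$, both contradictions.

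For uniqueness, if $(z_1', z_2')$ is another unbalanced master, applying the master property of each to the other produces scaling constants $n, n' \in \nn$ with $nn' = 1$, so $n = n' = 1$ and $(z_1', z_2') \in \{(z_1, z_2), (z_2, z_1)\}$. I expect the main obstacle to be the sign-tracking case analysis in the reverse direction, where one must carefully verify that in each subcase the positive and negative parts of $v - u$ cannot match those of $(\varepsilon n - 1)(z_1 - z_2)$ without contradicting either the unbalancedness of $(z_1, z_2)$ or the equality $|u| = |v|$. The forward direction is comparatively clean once one identifies $K$ as the natural home for relations and notices that length-factoriality forces $K$ to be infinite cyclic.
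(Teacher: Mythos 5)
The paper does not actually prove this proposition; it is imported verbatim from \cite[Corollary~3.2]{CCGS21}, so there is no in-paper argument to compare against. Judged on its own, your proof is correct and complete. The linearization device --- passing to the relation subgroup $K \le \gp(\mathsf{Z}(M))$ and observing that length-factoriality is exactly injectivity of the length homomorphism on $K$, so that for a proper LFM $K$ is infinite cyclic and its generator's positive/negative parts supply the master relation --- is sound, and all the supporting steps check out: the verification that $\pi(z_1)=\pi(z_2)$ via cancellativity, the support-matching argument identifying an arbitrary irredundant unbalanced relation with $(nz_1,nz_2)$ or $(nz_2,nz_1)$, and the derivation that a master must itself be irredundant. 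Two small remarks. First, the ``sign-tracking case analysis'' you flag as the main obstacle in the reverse direction evaporates: from $v-u=(\varepsilon n-1)(z_1-z_2)$ just apply the length homomorphism to get $0=|v|-|u|=(\varepsilon n-1)(|z_1|-|z_2|)$, and since $|z_1|\neq|z_2|$ this forces $\varepsilon n=1$, hence $v=u$ --- no matching of positive and negative parts is needed. Second, for the final uniqueness clause you only treat ``another \emph{unbalanced} master''; to conclude that $(z_1,z_2)$ and $(z_2,z_1)$ are the \emph{only} masters you should add the one-line observation that a balanced pair $(y_1,y_2)$ can never be a master once a single irredundant unbalanced relation exists, since $(ny_1,ny_2)$ and $(ny_2,ny_1)$ are always balanced. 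With those touches the argument is airtight.
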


\begin{prop}[{\cite[Proposition~4.3]{CCGS21}}]
    Let $M$ be an atomic monoid, let $a \in \mathcal{A}(M)$ be a pure atom, and let $(z_1, z_2)$ be an irredundant factorization relation.  Then $a$ is in $z_1$ or~$z_2$ if and only if $(z_1, z_2)$ is unbalanced.
    \label{prop:pure_in_unbalanced}
\end{prop}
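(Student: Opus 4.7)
The plan is to handle the two directions of the biconditional separately: the forward direction is essentially immediate from the definitions, while the backward direction needs an argument by contradiction that scales up an auxiliary witness relation.

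For the forward direction, suppose $a$ appears in $z_1$ or $z_2$; by swapping if necessary, assume $a \in z_1$. If $a$ is purely long then $|z_1| > |z_2|$ by definition, and if $a$ is purely short then $|z_1| < |z_2|$. In either case $|z_1| \neq |z_2|$, so $(z_1, z_2)$ is unbalanced.

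For the backward direction, I argue by contradiction. Suppose $(z_1, z_2)$ is irredundant and unbalanced with $|z_1| > |z_2|$, and suppose $a \notin z_1 \cup z_2$; by symmetry in the two pure cases I may assume $a$ is purely long. First I produce a witness factorization relation. Since $a$ is pure it is not prime, so there exist $x, y \in M$ with $a \mid_M x + y$ but $a \nmid_M x$ and $a \nmid_M y$; write $a + w = x + y$. Choosing atomic factorizations of $w$, $x$, and $y$ gives two factorizations of the same element, one containing $a$ (from the $a + w$ side) and one not (from the $x + y$ side, since $a$ cannot appear in any atomic factorization of $x$ or of $y$). Cancelling common atoms yields an irredundant factorization relation $(w_1, w_2)$ with $a \in w_1$ and $a \notin w_2$, and the purely long property applied to $(w_1, w_2)$ gives $|w_1| > |w_2|$.

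Next I scale up. For each positive integer $n$, the pair $(nz_1 + w_2,\, nz_2 + w_1)$ is a factorization relation, since
\[
\pi(nz_1 + w_2) = n\,\pi(z_1) + \pi(w_2) = n\,\pi(z_2) + \pi(w_1) = \pi(nz_2 + w_1).
\]
Because $a$ is absent from $z_1$, $z_2$, and $w_2$ but present in $w_1$, the atom $a$ has multiplicity $0$ on the left and positive multiplicity on the right. Reducing to an irredundant factorization relation cancels equal numbers of each atom from both sides and so preserves the one-sided presence of $a$; applying the purely long property (with $a$ now appearing on the right) forces the side containing $a$ to be strictly longer. Since cancellation subtracts the same amount from both sides, this inequality transfers back to
\[
n|z_2| + |w_1| > n|z_1| + |w_2|, \quad\text{i.e.,}\quad |w_1| - |w_2| > n\bigl(|z_1| - |z_2|\bigr) \geq n.
\]
Choosing $n > |w_1| - |w_2|$ yields the desired contradiction.

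The main obstacle is producing the witness irredundant relation $(w_1, w_2)$ in which $a$ appears on exactly one side from the non-primality of $a$; once that witness is in hand, the purely long property combined with the scaling trick forces the contradiction cleanly.
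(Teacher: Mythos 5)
The paper quotes this proposition from \cite{CCGS21} without reproducing a proof, so there is nothing in-text to compare against; judged on its own, your argument is correct and follows the same strategy as the original proof in that reference. Both halves check out: the forward direction is immediate from the definition of a pure atom applied to $(z_1,z_2)$ or $(z_2,z_1)$, and in the backward direction the witness $(w_1,w_2)$ obtained from non-primality genuinely has $a$ on one side only (reduction to an irredundant relation removes the minimum of the two multiplicities of each atom, which is zero for $a$), so the amplification $(nz_1+w_2,\,nz_2+w_1)$ with $n$ large yields the contradiction $|w_1|-|w_2|>n\bigl(|z_1|-|z_2|\bigr)\geq n$ exactly as you state.
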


When discussing factorization properties of an integral domain $R$, we refer to the factorization properties of its multiplicative monoid of nonzero elements, denoted here by $R^\bullet$. An integral domain is an \emph{atomic domain} (resp., \emph{FFD}, \emph{idf-domain}, \emph{BFD}, \emph{UFD}, \emph{HFD}, \emph{LFD}) if its multiplicative monoid is an atomic monoid (resp., FFM, idf-monoid, BFM, UFM, HFM, LFM).

It is sometimes useful to construct an integral domain from a given monoid. One important construction of this nature is a \emph{monoid domain}. For an integral domain $R$ and a torsion-free monoid $M$ written additively, the ring $R[X; M]$, also denoted by $R[M]$, is defined as the ring of polynomial expressions of $X$ with coefficients in $R$ and exponents in $M$, with addition and multiplication defined as for standard polynomial rings. The ring $R[M]$ is an integral domain by~\cite[Theorem~8.1]{rG84}. A general reference for commutative monoid domains is~\cite{rG84}. When the integral domain $R$ is a field, the domain $R[M]$ is called a \emph{monoid algebra}.

\bigskip
\section{The Setting of Monoids}
\label{sec:monoids}

\subsection{Connection to the Finite Factorization Property}

Since Anderson, Anderson, and Zafrullah introduced the bounded and finite factorization properties in~\cite{AAZ90}, the chain of implications $UFM \ \Rightarrow \ FFM \ \Rightarrow BFM \Rightarrow \ ACCP$ has been studied in several classes of atomic monoids and domains, and it has been used to better understand the deviation of monoids and domains from satisfying the unique factorization property. It turns out that the property of being length-factorial fits nicely in the same chain of implications as follows:
\[
	\textbf{UFM} \ \Rightarrow \ \textbf{LFM} \ \Rightarrow \ \textbf{FFM}. 
\]
One can immediately see from the definitions that every UFM is an LFM. Let us proceed to prove that every LFM is an FFM.

\begin{prop}
    Every LFM is an FFM.
\end{prop}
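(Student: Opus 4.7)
My plan is to split the argument based on whether $M$ is a UFM or a proper LFM. If $M$ is a UFM, then $|\mathsf{Z}(x)| = 1$ for every $x \in M$ and $M$ is trivially an FFM. So the content lies in the proper LFM case, where I would invoke Proposition~\ref{prop:master_relation} to produce an unbalanced master factorization relation $(z_1, z_2)$. A preliminary observation I would record is that $(z_1, z_2)$ is automatically irredundant: if $w$ denotes the common atomic part of $z_1$ and $z_2$, then $(z_1 - w, z_2 - w)$ remains unbalanced and irredundant, and applying the master property to it quickly forces $w = 0$. I would also note the direct consequence of the LFM hypothesis that any nontrivial irredundant factorization relation must be unbalanced: otherwise two distinct factorizations of the same element would share a length.

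Next, I would fix an arbitrary $x \in M$ and a reference factorization $y_0 \in \mathsf{Z}(x)$. For any other $y \in \mathsf{Z}(x)$, stripping off the common atoms of $y$ and $y_0$ yields a nontrivial irredundant factorization relation $(y', y_0')$, which is unbalanced by the observation above. The master property then forces $(y', y_0') = (nz_1, nz_2)$ or $(y', y_0') = (nz_2, nz_1)$ for some $n \in \nn$, so $y - y_0 = \pm n (z_1 - z_2)$ inside the Grothendieck group of $\mathsf{Z}(M)$. Hence
\[
\mathsf{Z}(x) \subseteq \{\, y_0 + k(z_1 - z_2) : k \in \zz \,\}.
\]

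To finish, I would show that only finitely many $k \in \zz$ produce an element of $\mathsf{Z}(M)$, i.e., a formal sum of atoms with nonnegative coefficients. Since $\supp(z_1)$ and $\supp(z_2)$ are disjoint by the irredundancy step, for each atom $a \in \supp(z_2)$ the coefficient of $a$ in $y_0 + k(z_1 - z_2)$ is strictly decreasing in $k$, yielding an upper bound on $k$; similarly each $a \in \supp(z_1)$ yields a lower bound. Thus $|\mathsf{Z}(x)| < \infty$ and $M$ is an FFM.

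The main obstacle is the reduction step, which requires extracting the full strength of the LFM hypothesis: the ``no balanced irredundant relation'' half together with the master relation from Proposition~\ref{prop:master_relation} controlling the unbalanced half. Once the inclusion $\mathsf{Z}(x) \subseteq y_0 + \zz(z_1 - z_2)$ is in hand, the finiteness is a routine bounded-coefficient check enabled by the disjointness of $\supp(z_1)$ and $\supp(z_2)$.
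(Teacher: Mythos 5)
Your proof is correct, but it takes a genuinely different route from the paper's. You lean on Proposition~\ref{prop:master_relation}: after verifying that the master relation $(z_1,z_2)$ may be taken irredundant and that every nontrivial irredundant factorization relation in an LFM is unbalanced, you trap $\mathsf{Z}(x)$ inside the line $y_0 + \zz(z_1-z_2)$ and cut it down to a finite set via the sign constraints on the coefficients over $\supp(z_1)$ and $\supp(z_2)$ (both supports are nonempty, since a nonempty factorization cannot represent the identity of $M_{\text{red}}$, so you really do get both an upper and a lower bound on $k$). The paper instead shows directly that $M$ is an atomic idf-monoid and then invokes the monoid version of \cite[Theorem~5.1]{AAZ90}: it takes the two shortest lengths $\ell_1 < \ell_2$ in $\mathsf{L}(x)$ with witnesses $z_1, z_2$, and for any further factorization $z_3$ of length $\ell_3 > \ell_2$ uses the identity $\ell_2(\ell_3-\ell_1) = \ell_1(\ell_3-\ell_2)+\ell_3(\ell_2-\ell_1)$ to force $z_2^{\ell_3-\ell_1} = z_1^{\ell_3-\ell_2}z_3^{\ell_2-\ell_1}$ in $\mathsf{Z}(x^{\ell_3-\ell_1})$, so that every atom dividing $x$ already appears in $z_1$ or $z_2$. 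Your argument buys a sharper structural conclusion --- $\mathsf{Z}(x)$ is the intersection of an arithmetic progression with the nonnegative orthant, which bounds $|\mathsf{Z}(x)|$ explicitly --- at the cost of importing the master-relation machinery of \cite{CCGS21}; the paper's argument is essentially self-contained apart from the standard equivalence of FFM with atomic idf-monoid, and it yields the complementary fact that the two shortest factorizations of $x$ already account for all of its atomic divisors.
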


\begin{proof}
	Let $M$ be an LFM. Observe that $M$ is an LFM (resp., an FFM) if and only if $M_{\text{red}}$ is an LFM (resp., an FFM). Therefore we can assume, without loss of generality, that $M$ is reduced. Fix $x \in M$, and let us show that the set $A_x$ consisting of all atoms dividing $x$ in $M$ is finite. Because $A_x$ is finite when $\mathsf{Z}(x)$ is a singleton, we can assume that $|\mathsf{Z}(x)| \ge 2$. Since $M$ is an LFM, $|\mathsf{L}(x)| \ge 2$. Set $\ell_1 = \min \mathsf{L}(x)$ and $\ell_2 = \min (\mathsf{L}(x) \setminus \{\ell_1\})$, and take $z_1, z_2 \in \mathsf{Z}(x)$ such that $|z_1| = \ell_1$ and $|z_2| = \ell_2$. We show that every atom in $A_x$ must appear in either $z_1$ or $z_2$. Take $a \in \mathcal{A}(M)$ such that $a \mid_M x$, and pick $z_3 \in \mathsf{Z}(x)$ such that $a$ appears in $z_3$. If $\ell_3 := |z_3| \in \{\ell_1, \ell_2\}$, then the fact that $M$ is an LFM ensures that $z_3 \in \{z_1, z_2\}$, and so $a$ appears in either $z_1$ or $z_2$. Assume, on the other hand, that $\ell_3 \notin \{\ell_1, \ell_2\}$, so that $\ell_1 < \ell_2 < \ell_3$. Observe now that $z_2^{\ell_3 - \ell_1}$ and $z_1^{\ell_3 - \ell_2} z_3^{\ell_2 - \ell_1}$ are factorizations in $\mathsf{Z}(x^{\ell_3 - \ell_1})$ having the same length, namely, $\ell_2(\ell_3 - \ell_1)$. So since $M$ is an LFM, $z_2^{\ell_3 - \ell_1} = z_1^{\ell_3 - \ell_2} z_3^{\ell_2 - \ell_1}$. Thus, $a$ appears in $z_2$. Therefore the only atoms dividing $x$ in $M$ are those that appear in either $z_1$ or $z_2$. Hence $M$ is an atomic idf-monoid, and therefore an FFM.
\end{proof}

According to \cite[Corollary~4.6]{CCGS21}, every proper LFM is a PLSM. Therefore a proper LFM is both an FFM and a PLSM. As the following example illustrates, there are monoids
that are FFMs and PLSMs simultaneously, but still fail to be LFMs.

\begin{example}\label{ex:ffm_plsm_not_lfm}
    Consider the numerical monoid $M := \nn_0 \setminus \{1\}$ and the additive submonoid $N := \{ (0, 0) \} \cup (\nn_0 \times \nn)$ of $\zz^2$. Both monoids are reduced atomic monoids, with $\mathcal{A}(M) = \{2, 3\}$ and $\mathcal{A}(N) = \{(n, 1) \mid n \in \nn_0\}$. Therefore the monoid $M \times N$ is atomic, and
	\begin{equation*}
		\mathcal{A}(M \times N) = \{a_1, a_2\} \cup A,
	\end{equation*}
        where $a_1 := (2,(0,0))$, $a_2 := (3,(0,0))$, and $A := \{ (0,a) \mid a \in \mathcal{A}(N) \}$. We claim that $a_1 \in \mathcal{L}(M \times N)$ and $a_2 \in \mathcal{S}(M \times N)$. To see this, suppose that $(z, z')$ is an irredundant unbalanced factorization relation of $M \times N$, where $z = c_1 a_1 + c_2 a_2 + \sum_{a \in A} c_a a$ and $z' = c'_1 a_1 + c'_2 a_2 + \sum_{a \in A} c'_a a$ for some $c_1, c'_1, c_2, c'_2, c_a, c'_a \in \nn_0$ with $c_a = c'_a = 0$ for all but finitely many $a \in A$. Assume, without loss of generality, that $|z| < |z'|$. Now observe that $N$ is an HFM, and so $\sum_{a \in A} c_a a = \sum_{a \in A} c'_a a$ implies that $\sum_{a \in A} c_a = \sum_{a' \in A} c'_a$. Thus, $c_1 + c_2 < c'_1 + c'_2$. Then the irredundancy of $(z_1, z_2)$, along with the equality $2c_1 + 3 c_2 = 2c'_1 + 3 c'_2$, ensures that $c_1 = c'_2 = 0$ and $0 \notin \{c'_1, c_2\}$. As a consequence, $a_1$ is a purely long atom and $a_2$ is a purely short atom (indeed, we have proved that $\mathcal{L}(M \times N) = \{a_1\}$ and $\mathcal{S}(M \times N) = \{a_2\}$). Hence $M \times N$ is a PLSM.
	The monoid $M \times N$ is an FFM because it is the direct product of two FFMs. Finally, we observe that $M \times N$ is not an LFM because the element $(0, (2, 2))$ can be factored as $(0, (0, 1)) + (0, (2, 1))$ or as $(0, (1, 1)) + (0, (1, 1))$, which have the same length.
\end{example}

Neither of the conditions of being an FFM or being a PLSM implies the other. Indeed, an atomic domain cannot have purely long and purely short atoms simultaneously \cite[Theorem~6.4]{CCGS21}, and so the multiplicative monoid of any finite factorization domain is an example of an FFM that is not a PLSM. In addition, the following example provides a rank-1 FFM that is not a PLSM.

\begin{example}\label{ex:ffm_not_plsm}
    For each $n \in \nn$ with $n \geq 3$, the monoid $N = \langle n, n + 1, n + 2 \rangle$ is a torsion-free rank-1 monoid. Since $n \geq 3$, we have $\mathcal{A}(N) = \{ n, n + 1, n + 2 \}$. Observe that $N$ is an FFM. On the other hand, it follows from~\cite[Proposition~5.7]{CCGS21} that $N$ is not a PLSM.
\end{example}

In the following example, we provide a PLSM that is not an FFM.

\begin{example}\label{ex:plsm_not_ffm}
	Consider the numerical monoid $M := \nn_0 \setminus \{1\}$ and the additive submonoid $N := \{(0,0)\} \cup (\zz \times \nn)$ of $\zz^2$. Note that $M$ and $N$ are reduced atomic monoids with $\mathcal{A}(M) = \{2,3\}$ and $\mathcal{A}(N) = \{(n,1) \mid n \in \zz\}$. Therefore the monoid $M \times N$ is atomic, and
	\[
		\mathcal{A}(M \times N) = \{a_1, a_2\} \cup A,
	\]
        where $a_1 := (2,(0,0))$, $a_2 := (3,(0,0))$, and $A := \{ (0,a) \mid a \in \mathcal{A}(N) \}$. As in Example~\ref{ex:ffm_plsm_not_lfm}, $M \times N$ is a PLSM. However, $M \times N$ is not an FFM because every atom in the infinite set $A$ divides the element $(0, (0,2))$ in $M \times N$.
\end{example}

\smallskip
\subsection{Inheritance}
The notion of hereditary atomicity was introduced and studied in~\cite{CGH21}, where an integral domain is defined to be hereditarily atomic if all of its subrings are atomic. In this section, we extend this notion to other properties.

We say that a monoid $M$ is a \emph{hereditary LFM} if every submonoid of $M$ is an LFM. It turns out that the class of monoids that are hereditary LFMs is rather small.

\begin{prop}
    A monoid $M$ is a hereditary LFM if and only if $M$ is a torsion abelian group.
\end{prop}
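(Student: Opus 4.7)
My plan is to prove both directions of the biconditional. For the easy direction, suppose $M$ is a torsion abelian group. I claim that every submonoid $N \subseteq M$ is itself a group and hence contains no non-invertible elements, so the LFM condition is satisfied vacuously. Indeed, for any $x \in N$, the torsion hypothesis gives $nx = 0$ for some $n \in \nn$, so $(n-1)x \in N$ serves as an additive inverse of $x$ in $N$. Thus every submonoid of $M$ is trivially an LFM.

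For the main direction, assume $M$ is a hereditary LFM; I want to conclude that $M$ is a torsion abelian group. I argue by contradiction: suppose $M$ is not a torsion abelian group. I first locate an element $x \in M$ whose non-negative integer multiples $\{nx : n \in \nn_0\}$ are pairwise distinct. Either $M$ is not a group (in which case I choose $x$ to be any non-unit) or $M$ is a group containing an element $x$ of infinite order. In both cases cancellativity forces $nx \neq mx$ whenever $n \neq m$, because $(n-m)x = 0$ would make $x$ a unit. The submonoid $\langle x \rangle \subseteq M$ is therefore isomorphic to $\nn_0$, so the submonoid $\langle 3x, 5x, 7x \rangle \subseteq M$ is isomorphic to the numerical monoid $N := \langle 3, 5, 7 \rangle$.

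It then suffices to check that $N$ itself is not an LFM, which will contradict the hereditary LFM hypothesis on $M$. Here $3, 5, 7$ are all atoms of $N$ (since neither $2$ nor $4$ belongs to $N$, none of them decomposes as a sum of two nonzero elements of $N$), so the identity
\[
    3 + 7 + 7 + 7 \;=\; 24 \;=\; 5 + 5 + 7 + 7
\]
exhibits two distinct factorizations of $24 \in N$ (distinct since only the first uses the atom $3$) of the same length $4$. This gives the desired contradiction, so $M$ must be a torsion abelian group.

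I do not anticipate a significant obstacle: the heart of the argument is finding the numerical ``witness'' $\langle 3, 5, 7 \rangle$ inside $M$, and the preliminary step of producing an element with a $\nn_0$-copy inside $M$ is automatic for any cancellative commutative monoid that is not a torsion abelian group.
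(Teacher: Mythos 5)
Your proof is correct and follows essentially the same route as the paper: both directions reduce to producing an element $x$ of infinite order and observing that a three-generated numerical-monoid copy inside $\langle x\rangle$ fails to be an LFM, while submonoids of torsion groups are groups and hence vacuously LFMs. The only difference is cosmetic: the paper invokes \cite[Proposition~5.7]{CCGS21} to see that $\langle 3x,4x,5x\rangle$ is not an LFM, whereas you verify the witness $\langle 3x,5x,7x\rangle$ by hand via the balanced relation $3+7+7+7=5+5+7+7$, which makes your argument self-contained.
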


\begin{proof}
    For the direct implication, suppose for the sake of contradiction that some element $x$ of $M$ has infinite order. By~\cite[Proposition~5.7]{CCGS21}, the submonoid $\langle 3x, 4x, 5x \rangle$ is not an LFM, so $M$ is not a hereditary LFM.
    
    The reverse implication follows from the fact that any submonoid of a torsion abelian group is a torsion abelian group, and that any abelian group is vacuously an LFM.
\end{proof}

We define a monoid $M$ to be a \emph{hereditary FFM} if every submonoid of $M$ is an FFM. We have seen in Example~\ref{ex:ffm_not_plsm} that for each $n \in \nn$ with $n \geq 3$, the numerical monoid $\langle n, n + 1, n + 2\rangle$ is an example of an FFM that is not a PLSM. From the fact that every submonoid of $N$ is a submonoid of $\nn$, we conclude that $N$ is a hereditary FFM that is not a PLSM.

As the following proposition indicates, the hereditary version of the PLS property is a vacuous property.

\begin{prop}
    Every monoid contains a nontrivial submonoid with no pure atoms.
\end{prop}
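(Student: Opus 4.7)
The plan is to construct, for any nontrivial monoid $M$, a cyclic submonoid $N$ that either contains no atoms at all or whose unique atom is prime. Since the definition of pure atom explicitly requires non-primality, such an $N$ will vacuously contain no pure atoms.

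Specifically, I would pick any nonzero $x \in M$ and set $N := \langle x \rangle = \{nx : n \in \nn_0\}$. Using the cancellativity of $M$, the monoid $N$ satisfies a dichotomy: either $nx \ne mx$ for all distinct $n, m \in \nn_0$, so that $N \cong \nn_0$; or else $mx = 0$ for some $m \in \nn$, in which case each element of $N$ has an additive inverse in $N$, so $N$ is a finite cyclic group. This dichotomy is immediate from cancellation, since $nx = mx$ with $n > m$ gives $(n-m)x = 0$.

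In the group case, every element of $N$ is a unit, so $\mathcal{A}(N) = \emptyset$, and the conclusion is immediate. In the case $N \cong \nn_0$, the only atom of $N$ is $x$ itself, and $x$ is prime in $N$ (divisibility in $\nn_0$ is just the usual order, and the atom $1$ is prime there), so by the non-primality clause in the definition, $x$ cannot be a pure atom. In both cases, $N$ is nontrivial, as it contains $x \ne 0$, and $\mathcal{L}(N) \cup \mathcal{S}(N) = \emptyset$.

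I do not anticipate any substantive obstacle: the argument hinges entirely on the observation that a cyclic submonoid of a cancellative commutative monoid is either a group or isomorphic to $\nn_0$, combined with the fact that primes are excluded from the set of pure atoms by definition.
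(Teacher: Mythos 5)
Your proof is correct, but it takes a genuinely different and more elementary route than the paper. The paper splits on whether $M$ has an element $x$ of infinite order: if so, it takes the submonoid $\langle 3x, 4x, 5x\rangle$ and invokes \cite[Corollary~5.8]{CCGS21} to conclude it has no pure atoms; if not, $M$ is a torsion group and has no atoms at all. You instead work with the single cyclic submonoid $\langle x\rangle$ for any nonzero $x$, observe via cancellativity that it is either a finite group (no atoms) or isomorphic to $\nn_0$ (unique atom, which is prime and hence excluded from being pure by the non-primality clause in the definition). Your argument is self-contained --- it replaces the external citation with the elementary fact that the generator of $\nn_0$ is prime --- and it is arguably cleaner for that reason; what the paper's choice of $\langle 3x, 4x, 5x\rangle$ buys is an atomic submonoid that is not a group and genuinely has several non-prime atoms yet still no pure ones, which is a slightly stronger illustration of the phenomenon. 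Both proofs share the same degenerate caveat that the trivial monoid has no nontrivial submonoid at all, so neither loses points there.
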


\begin{proof}
    Let $M$ be a monoid. If some element $x$ of $M$ has infinite order, then the submonoid $\langle 3x, 4x, 5x \rangle$ of $M$ has no pure atoms by~\cite[Corollary~5.8]{CCGS21}. Assume, therefore, that every element of $M$ has finite order. Then $M$ is a torsion abelian group, so $M$ does not contain any atoms, and thus $M$ has no pure atoms.
\end{proof}

\smallskip
\subsection{Attainability of Master Factorization Relations}

We extend Proposition~\ref{prop:master_relation} by showing that any valid factorization relation written in lowest terms can be used as a master factorization relation.

\begin{theorem}\label{thm:mf_classify}
    Let $m, n$ be nonnegative integers, and let $\{a_i\}_{i = 1}^m$ and $\{b_i\}_{i = 1}^n$ be two sequences of positive integers such that no integer greater than $1$ divides every $a_i$ and $b_i$, if $m = 1$ then $a_1 \neq 1$, if $n = 1$ then $b_1 \neq 1$, and $\sum_{i = 1}^m a_i > \sum_{i = 1}^n b_i$. Then there exists an LFM with purely long atoms $\{\alpha_i\}_{i = 1}^m$ and purely short atoms $\{\beta_i\}_{i = 1}^n$ having \begin{equation}\label{eq:mf_classify}
        \sum\limits_{i = 1}^m a_i \alpha_i = \sum\limits_{i = 1}^n b_i \beta_i
    \end{equation}
    as an unbalanced master factorization relation.
\end{theorem}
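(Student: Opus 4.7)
The plan is to realize $M$ as the image of the free commutative monoid $F := \nn_0^{m+n}$ (on symbols $\alpha_1,\dots,\alpha_m,\beta_1,\dots,\beta_n$) inside the abelian group $G := \zz^{m+n}/\zz v$, where $v := (a_1,\dots,a_m,-b_1,\dots,-b_n)$. The gcd hypothesis makes $v$ a primitive lattice vector, so $G$ is free abelian of rank $m+n-1$, hence torsion-free; this makes $M$ a cancellative commutative monoid in which the prescribed equality $\sum_i a_i\alpha_i = \sum_j b_j\beta_j$ holds by construction. Write $\ell := \sum_i a_i$ and $s := \sum_j b_j$; one sees that $m,n\ge 1$ must hold, since $m=0$ violates $\ell > s\ge 0$, while $n=0$ would give $-\alpha_1 = (a_1-1)\alpha_1+\sum_{i\ge 2}a_i\alpha_i\in M$ and hence make $\alpha_1$ a unit, contradicting its being an atom.

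The length map $|\cdot|\colon \zz^{m+n}\to\zz$ that sends each basis vector to $1$ satisfies $|v|=\ell-s>0$, so whenever $z_1,z_2\in F$ satisfy $\pi(z_1)=\pi(z_2)$ and $|z_1|=|z_2|$, writing $z_1-z_2=kv$ yields $0=k(\ell-s)$, i.e.\ $k=0$, so $z_1=z_2$. Thus $M$ is length-factorial. Moreover, any irredundant factorization relation $(u_1,u_2)$ lifts to $u_1-u_2=kv$ for some $k\in\zz$, and a direct coordinate check using disjoint support forces $(u_1,u_2)=(kz_1^\star,kz_2^\star)$ or its swap, where $(z_1^\star,z_2^\star):=(\sum_i a_i\alpha_i,\sum_j b_j\beta_j)$. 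Hence $(z_1^\star,z_2^\star)$ is an irredundant unbalanced master factorization relation and $M$ is a proper LFM, consistent with Proposition~\ref{prop:master_relation}.

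The main technical step is showing that the $m+n$ symbols descend to pairwise distinct atoms of $M$. Suppose $\alpha_i=x+y$ with $x,y\in M\setminus\{0\}$; lifting to $\tilde x,\tilde y\in F\setminus\{0\}$ yields $\tilde x+\tilde y-e_i^\alpha=kv$ for some $k\in\zz$. The case $k=0$ forces one lift to vanish; for $k>0$ each $\beta$-coordinate of $\tilde x+\tilde y$ equals $-kb_j<0$ (using $n\ge 1$); and for $k<0$ the $\alpha$-coordinate at any $j\ne i$ equals $ka_j<0$, so $m=1$, but then $ka_1+1\ge 0$ with $k\le -1$ and $a_1\ge 1$ forces $k=-1$ and $a_1=1$, contradicting the hypothesis. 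The argument for $\beta_j$ is symmetric, and analogous coordinate analyses yield $\alpha_i\ne\alpha_{i'}$, $\beta_j\ne\beta_{j'}$, and $\alpha_i\ne\beta_j$ (again invoking $a_1\ne 1$ or $b_1\ne 1$ in the edge cases $m=1$ or $n=1$). So $\mathcal{A}(M)=\{\alpha_1,\dots,\alpha_m,\beta_1,\dots,\beta_n\}$.

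Finally, to identify the pure atoms, note that in any LFM every irredundant balanced relation is $(0,0)$, so any irredundant relation containing a given atom must be unbalanced, hence (by paragraph two) equal to $(kz_1^\star,kz_2^\star)$ or its swap for some $k\ge 1$. An irredundant relation with $\alpha_i$ on its left side must then be $(kz_1^\star,kz_2^\star)$, whose left length $k\ell$ strictly exceeds its right length $ks$, giving $\alpha_i\in\pl(M)$; and $\alpha_i$ is not prime, since it divides $\sum_j b_j\beta_j$ in $M$ but no individual $\beta_j$. Symmetrically $\beta_j\in\ps(M)$. The main obstacle is the coordinate casework in paragraph three, where the hypotheses $a_1\ne 1$ (when $m=1$) and $b_1\ne 1$ (when $n=1$) are essential to rule out spurious identifications among the generators.
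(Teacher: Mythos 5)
Your proof is correct and follows essentially the same route as the paper's: you present the monoid by the $m+n$ generators subject to the single relation~(\ref{eq:mf_classify}) (the paper realizes this with linearly independent indeterminates and a defined $\beta_n$, you with the lattice quotient $\zz^{m+n}/\zz v$ --- isomorphic constructions, since the gcd condition makes $v$ primitive), show every factorization relation is an integer multiple of the master relation, and invoke the $a_1 \neq 1$ and $b_1 \neq 1$ hypotheses to verify that the generators are atoms. The only differences are cosmetic: you prove length-factoriality directly via the length functional and spell out the non-primality of the atoms, details the paper delegates to Proposition~\ref{prop:master_relation} and the definitions.
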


\begin{proof}
    Let $\{\alpha_i\}_{i = 1}^m$ and $\{\beta_i\}_{i = 1}^{n - 1}$ be linearly independent indeterminates, and set \begin{equation}\label{eq:beta_n_def}
        \beta_n = \frac1{b_n} \left( \sum\limits_{i = 1}^m a_i \alpha_i - \sum\limits_{i = 1}^{n - 1} b_i \beta_i \right).
    \end{equation}
    Let $M$ be the additive monoid generated by the set $\{\alpha_i, \beta_j \mid i \in \llbracket 1, m \rrbracket \text{ and } j \in \llbracket 1, n \rrbracket \}$. We now show that any factorization relation involving the $\alpha_i$ and $\beta_i$ is a multiple of~(\ref{eq:mf_classify}). Let \begin{equation*}
        \sum\limits_{i = 1}^m a'_i \alpha_i = \sum\limits_{i = 1}^n b'_i \beta_i
    \end{equation*}
    in $\gp(M)$, with all $a'_i$ and $b'_i$ integers (possibly negative). Multiplying by $b_n$ and subtracting the equality~(\ref{eq:mf_classify}) multiplied by $b'_n$, we obtain \begin{equation*}
        \sum\limits_{i = 1}^m (b_n a'_i - b'_n a_i) \alpha_i = \sum\limits_{i = 1}^n (b_n b'_i - b'_n b_i) \beta_i
    \end{equation*}
    in $\gp(M)$.
    The coefficient of $\beta_n$ in the resulting relation is $0$, so by linear independence, $b_n a'_i = b'_n a_i$ and $b_n b'_i = b'_n b_i$ for all $i$. Setting $c = \frac{b'_n}{b_n}$, we obtain $a'_i = ca_i$ and $b'_i = cb_i$. Therefore any linear relation involving the $\alpha_i$ and $\beta_i$ is a multiple of~(\ref{eq:mf_classify}), as claimed.
    
    We now show that all $\alpha_i$ and $\beta_i$ are atoms. Suppose for the sake of contradiction that some $\alpha_k$ can be written as a sum of the other $\alpha_i$ and $\beta_i$. The resulting factorization must be a multiple of~(\ref{eq:mf_classify}), which is only possible if $m = 1$. But when $m = 1$, the condition that $a_1 \neq 1$ prevents $\alpha_1$ from being reducible. The same argument shows that every $\beta_i$ is an atom.

    It follows from what has been proved that~(\ref{eq:mf_classify}) is a master factorization relation. Therefore, by Proposition~\ref{prop:master_relation}, $M$ is an LFM, and the condition that $\sum_{i = 1}^m a_i > \sum_{j = 1}^n b_j$ ensures that each $\alpha_i$ is a purely long atom and that each $\beta_j$ is a purely short atom.
\end{proof}

By~\cite[Theorem~6.4]{CCGS21}, an integral domain cannot have both purely long irreducibles and purely short irreducibles. Theorem~\ref{thm:mf_classify} leads to a contrasting result for monoids.

\begin{cor}\label{cor:pls_m_n}
	For every pair $(m,n) \in \nn^2$, there exists a monoid $M$ with $|\mathcal{L}(M)| = m$ and $|\mathcal{S}(M)| = n$.
\end{cor}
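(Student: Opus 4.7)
The plan is to apply Theorem~\ref{thm:mf_classify} directly: given $(m,n) \in \nn^2$, I will exhibit sequences of positive integers $a_1,\ldots,a_m$ and $b_1,\ldots,b_n$ satisfying the four hypotheses of that theorem, which produces an LFM $M$ whose $\alpha_i$ are purely long atoms and whose $\beta_j$ are purely short atoms. Since the construction in the proof of Theorem~\ref{thm:mf_classify} yields $\mathcal{A}(M) = \{\alpha_i\}_{i=1}^m \cup \{\beta_j\}_{j=1}^n$, and since Proposition~\ref{prop:pure_in_unbalanced} forces every pure atom to appear in the unique (up to swap) master factorization relation~\eqref{eq:mf_classify}, I get $\mathcal{L}(M) = \{\alpha_i\}_{i=1}^m$ and $\mathcal{S}(M) = \{\beta_j\}_{j=1}^n$, giving $|\mathcal{L}(M)| = m$ and $|\mathcal{S}(M)| = n$.

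All that remains is to pick coefficients satisfying: (i) $\gcd(a_1,\ldots,a_m,b_1,\ldots,b_n) = 1$; (ii) if $m = 1$, then $a_1 \neq 1$; (iii) if $n = 1$, then $b_1 \neq 1$; and (iv) $\sum_{i=1}^m a_i > \sum_{j=1}^n b_j$. A uniform choice that handles every $(m,n) \in \nn^2$ is
\[
    a_1 = n+2, \qquad a_i = 1 \ \text{ for } 2 \leq i \leq m, \qquad b_1 = 2, \qquad b_j = 1 \ \text{ for } 2 \leq j \leq n.
\]
Conditions (ii) and (iii) are immediate since $a_1 \geq 3$ and $b_1 = 2$. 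For (i), if either $m \geq 2$ or $n \geq 2$ then a generator equals $1$, while if $m = n = 1$ then $\gcd(3,2) = 1$. For (iv), a direct calculation gives $\sum_{i=1}^m a_i - \sum_{j=1}^n b_j = (n+2) + (m-1) - 2 - (n-1) = m \geq 1$.

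The only mild obstacle is arranging all four conditions simultaneously without splitting into cases for $m = 1$ and $n = 1$; the choice above does this in one stroke, and the corollary then follows as an immediate consequence of Theorem~\ref{thm:mf_classify}.
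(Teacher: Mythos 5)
Your proposal is correct and matches the paper's intent exactly: the paper states the corollary as an immediate consequence of Theorem~\ref{thm:mf_classify}, leaving the choice of coefficients implicit, and your uniform choice $a_1 = n+2$, $a_i = 1$, $b_1 = 2$, $b_j = 1$ verifiably satisfies all four hypotheses for every $(m,n) \in \nn^2$. Your observation that $\mathcal{A}(M) = \{\alpha_i\} \cup \{\beta_j\}$ together with Proposition~\ref{prop:pure_in_unbalanced} pins down $\mathcal{L}(M)$ and $\mathcal{S}(M)$ exactly is the right way to upgrade ``has these pure atoms'' to the exact counts $m$ and $n$.
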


\begin{remark}
    The atoms in the proof of Theorem~\ref{thm:mf_classify} can be taken to be linearly independent positive real numbers, showing that the statement holds for submonoids of the nonnegative real numbers.
\end{remark}

\bigskip
\section{Monoid Domains}
\label{sec:monoid domains}

As proved in~\cite[Theorem~6.4]{CCGS21}, an integral domain cannot contain purely long and purely short irreducibles simultaneously. However, in the same paper, the authors constructed a Dedekind domain having purely long (resp., purely short) irreducibles, but not purely short (resp., purely long) irreducibles.
As we proceed to prove, the same type of construction is not possible in the setting of monoid algebras with rational exponents. The atomicity of monoid algebras with rational exponents has been recently studied in~\cite{fG22}.

For a positive rational number $r$, let $\mathsf{n}(r)$ and $\mathsf{d}(r)$ be the numerator and denominator, respectively, of $r$ when it is written in lowest terms.

\begin{theorem}\label{thm:monalg_no_pls}
	Let $F$ be a field, and let $M$ be an atomic Puiseux monoid. Then $F[M]$ contains no pure irreducibles.
\end{theorem}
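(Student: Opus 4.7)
The plan is to argue by contradiction. Assume $f\in F[M]$ is a pure atom; I produce a balanced irredundant factorization relation containing $f$, which by Proposition~\ref{prop:pure_in_unbalanced} is impossible for any atom it contains. The degenerate case $|\mathcal{A}(M)|\le 1$ is dispensed with immediately: if $M$ is trivial or cyclic, then $F[M]$ is a field or $F[X]$, both UFDs, which have no pure atoms. Otherwise, fix distinct $t_1,t_2\in\mathcal{A}(M)$ and set $\alpha:=t_1-t_2$, a nonzero element of $\gp(M)\subseteq\qq$.

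The core construction is a shift trick: although $X^\alpha$ need not lie in $F[M]$, it is an automorphism of the ambient algebra $F[\gp(M)]$. For any atom $g\in F[M]$ with $\alpha+\supp(g)\subseteq M$, the shifted element $g':=X^\alpha g$ lies in $F[M]$ and satisfies $X^{t_1}g=X^{t_2}g'$ in $F[M]$. If $g$ is non-monomial and $g'$ is also an atom of $F[M]$, this identity is a balanced (length $2=2$) factorization relation that is irredundant: the monomials $X^{t_1},X^{t_2}$ are distinct since $t_1\neq t_2$, the non-monomials $g,g'$ differ since $\alpha\neq 0$, and each monomial differs from each non-monomial. To handle an arbitrary atom $f$: if $f$ is non-monomial and admits a valid shift (some $t_1,t_2\in\mathcal{A}(M)$ with $\alpha=t_1-t_2$ satisfying $\alpha+\supp(f)\subseteq M$), apply the construction with $g=f$. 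If $f=cX^{q_0}$ is monomial, take $t_1:=q_0$, $t_2\in\mathcal{A}(M)\setminus\{q_0\}$, and choose a non-monomial atom $g\in F[M]$ on which the shift is valid; then $f=X^{q_0}$ appears in the resulting balanced relation as one of the monomial factors.

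The main technical obstacle is verifying that the shifted element $g'$ is itself irreducible in $F[M]$. One handles this by choosing $N\in\nn$ with $N\cdot\supp(g)\subseteq\nn_0$ and embedding $F[M]\hookrightarrow F[X^{1/N}]\cong F[Y]$, the latter being a UFD; any nontrivial factorization of $g'$ in $F[M]$ would, after multiplication by $X^{-\alpha}$ in $F[X^{1/N}]$ and tracking of exponents via the compatibility $\alpha+\supp(g)\subseteq M$, yield a nontrivial factorization of $g$ in $F[M]$, contradicting the atomicity of $g$. Finally, in the case where no compatible shift of $f$ is available directly, a secondary construction is invoked: purity forces $f$ to be non-prime, so one can find atoms $a,b\in F[M]$ with $f\mid ab$ but $f\nmid a,b$, and by a careful choice guided by the $F[X^{1/N}]$-factorization of $f$ one arranges for the quotient $c:=ab/f$ to itself be an atom of $F[M]$, producing the balanced length-$2$ irredundant relation $ab=fc$ containing $f$.
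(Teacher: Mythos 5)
Your overall strategy --- exhibit, for each candidate atom, a balanced irredundant factorization relation containing it and invoke Proposition~\ref{prop:pure_in_unbalanced} --- matches the paper's, and your shift trick $X^{t_1}g = X^{t_2}\bigl(X^{t_1-t_2}g\bigr)$ is essentially the mechanism behind the paper's hardest case. However, the write-up leaves unproved exactly the steps where the difficulty lies. First, you never establish that a non-monomial atom $g$ with $\alpha+\supp(g)\subseteq M$ exists; for a general atomic Puiseux monoid this requires an explicit construction. The paper produces such atoms as $x^{pa}-x^{Qb}$ with $pa-Qb=1$ and $p$ minimal, and the proof that these are irreducible is a real argument (reducibility would force $Qb-a\in M$ or $pa-b\in M$, contradicting the minimality of $p$). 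Second, your argument that the shifted element $g'=X^{\alpha}g$ is again an atom does not work as stated: if $g'=uv$ in $F[M]$, multiplying by $X^{-\alpha}$ gives $g=(X^{-\alpha}u)v$, and to conclude anything you would need $-\alpha+\supp(u)\subseteq M$ (or the analogue for $v$); the hypothesis $\alpha+\supp(g)\subseteq M$ gives no control over the supports of the \emph{factors} of $g'$. The paper sidesteps this by proving irreducibility of both $x^{pa}-x^{Qb}$ and $x^{rb}-x^{Sa}$ independently, and then raising them to the powers $c$ and $b-a$ so that the monomial discrepancy $x^{c(b-a)}$ can be redistributed as $(x^a)^c$ versus $(x^b)^c$ while keeping the two lengths equal. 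Third, your fallback ``secondary construction'' (finding atoms $a,b$ with $f\mid ab$ and $ab/f$ again an atom) is pure assertion and cannot be accepted as written.

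A further remark: the non-monomial case you spend effort on is actually free. For any two distinct atoms $a_1,a_2$ of $M$, the monomial relation $\bigl((x^{a_1})^{\mathsf{n}(a_2)\mathsf{d}(a_1)},\,(x^{a_2})^{\mathsf{n}(a_1)\mathsf{d}(a_2)}\bigr)$ is irredundant and unbalanced, so by Proposition~\ref{prop:pure_in_unbalanced} every pure atom of $F[M]$ is associated to one of these two monomials; when $M$ has three or more atoms, varying the pair already empties $\mathcal{L}(F[M])\cup\mathcal{S}(F[M])$. The only situation genuinely requiring your shift construction is $|\mathcal{A}(M)|=2$ with $f$ a monomial, and that is precisely where your proposal is least complete.
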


\begin{proof}
	Set $R = F[M]$. Assume first that $|\mathcal{A}(M)| = 1$. Since $M$ is atomic, it is clear that $M \cong \nn_0$, and so $R$ is isomorphic to the polynomial ring $F[X]$, which is a UFD. As a result, both sets $\mathcal{L}(M)$ and $\mathcal{S}(M)$ are empty. Assume for the rest of the proof that $|\mathcal{A}(M)| \ge 2$. We split the rest of the proof into the following two cases.
	\smallskip
	
        \noindent \textit{Case 1:} $|\mathcal{A}(M)| > 2$. Let $a_1, a_2$, and $a_3$ be three distinct atoms of $M$. Now observe that after setting $z_1 := (X^{a_1})^{\mathsf{n}(a_2) \mathsf{d}(a_1)}$ and $z_2 := (X^{a_2})^{\mathsf{n}(a_1) \mathsf{d}(a_2)}$, we obtain that $\pi_R(z_1) = X^{\mathsf{n}(a_1) \mathsf{n}(a_2)} = \pi_R(z_2)$. Therefore $(z_1, z_2)$ is an irredundant unbalanced factorization relation of $R$, and so $\mathcal{L}(R) \cup \mathcal{S}(R) \subseteq \{X^{a_1}, X^{a_2} \}$. We can repeat the same argument replacing the pair $(a_1, a_2)$ by the pairs $(a_1, a_3)$ and $(a_2, a_3)$ to obtain that $\mathcal{L}(R) \cup \mathcal{S}(R)$ is empty by Proposition~\ref{prop:pure_in_unbalanced}.
	\smallskip
	
        \noindent \textit{Case 2:} $|\mathcal{A}(M)| = 2$. Since multiplication by a positive rational number induces an isomorphism of Puiseux monoids, we can assume that $M$ is an additive submonoid of $\nn_0$ with $\mathcal{A}(M) = \{a,b\}$ for some $a,b \in \nn_{\ge 2}$ with $a < b$ and $\gcd(a, b) = 1$. Because $\{q X^n \mid q \in F^\bullet \text{ and } n \in M\}$ is a divisor-closed multiplicative submonoid of $R^\bullet$ with reduced monoid isomorphic to $M$, the only irreducible monomials in $R$ are $X^a$ and $X^b$ up to associates. Therefore the monomial $X^{ab}$ has only two factorizations in $R$, namely, $X^{ab} = \big( X^a\big)^b = \big( X^b \big)^a$. As a result, by Proposition~\ref{prop:pure_in_unbalanced}, if $\mathcal{L}(M)$ (resp., $\mathcal{S}(M)$) is nonempty, then $\mathcal{L}(M) = \big\{ qX^a \mid q \in F^\bullet \big\}$ (resp., $\mathcal{S}(M) = \big\{ qX^b \mid q \in F^\bullet \big\}$). Let $p$ and $r$ be the minimum positive integers such that $b \mid pa - 1$ and $a \mid rb -1$, respectively. Now write $pa - Qb = 1$ and $rb - Sa = 1$ for some $Q,S \in \nn$.
	\smallskip
	
	\noindent \emph{Claim.} The elements $X^{pa} - X^{Qb}$ and $X^{rb} - X^{Sa}$ are irreducibles in $R$.
	\smallskip
	
	\noindent \emph{Proof of Claim.}  If $X^{pa} - X^{Qb}$ factors, then since $X^{pa} - X^{Qb}$ factors in $F[X]$ as $X^{Qb}(X - 1)$ and $F[X]$ is a UFD, $X^{pa} - X^{Qb}$ is divisible by a monomial irreducible of $R$. So suppose for the sake of contradiction that $X^a$ or $X^b$ divides $X^{pa} - X^{Qb}$. Then either $Qb - a$ or $pa - b$ is in $M$. Both cases contradict the minimality of $p$. Indeed, if $Qb - a \in M$, we may write $Qb = p'a + qb$, with $p' > 0$, so that $(p - p')a - qb = 1$. Similarly, if $pa - b \in M$, we may write $pa = p'a + qb$, with $p' < p$, so that $p'a - (Q - q)b = 1$. Therefore $X^{pa} - X^{Qb}$ is irreducible.
        Similarly, $X^{rb} - X^{Sa}$ is irreducible.
	\smallskip
	
	Set $a_1(X) := X^{rb} - X^{Sa}$ and $a_2(X) := X^{pa} -
        X^{Qb}$. By the claim, $a_1(X), a_2(X) \in
        \mathcal{A}(R)$. In addition, after setting $c := |Sa - Qb|$, we
        can write either $a_1(X) = X^c a_2(X)$ or $a_2(X) = X^c a_1(X)$ in $F[X]$. We will demonstrate the case when $Sa - Qb > 0$, with the other case being analogous. Consider the
        element $f := X^{ca} a_1(X)^{b-a}$ of $R$. Observe that $z_1 : = (X^a)^c a_1(X)^{b-a}$ is a factorization of $f$ with length $c + b - a$. On the other hand, setting $z_2 := (X^b)^c a_2(X)^{b-a}$, we obtain
	\[
		\pi(z_2) = (X^b)^c a_2(X)^{b-a} = X^{ca} (X^{c(b-a)}a_2(X)^{b-a}) = X^{ca} a_1(X)^{b-a} = f,
	\]
	so $z_2$ is also a factorization of $f$ with $|z_2| = |z_1|$. This implies that neither of the monomials $q X^a$ and $q X^b$ are pure irreducibles for any $q \in F^\bullet$. As a result, both sets $\mathcal{L}(M)$ and $\mathcal{S}(M)$ are empty, which concludes our proof.
\end{proof}

As a consequence of Theorem~\ref{thm:monalg_no_pls}, we obtain that the PLS property does not ascend, in general, from monoids to their corresponding monoid algebras.

\bigskip
\section{Semidomains}
\label{sec:semidomains}

In this last section, we state an open problem related to PLSMs and FFMs, which takes place in the context of semidomains.

\subsection{What Is a Semidomain?}

In the scope of this paper, a \emph{commutative semiring} $S$ is a nonempty set with two binary operations, which are denoted by ``$+$" and ``$\cdot$" and called \emph{addition} and \emph{multiplication}, respectively, such that the following statements hold.
\begin{itemize}
	\item $(S,+)$ is a monoid with identity $0$.
	\smallskip
	
	
	\item $(S, \cdot)$ is a commutative semigroup with identity $1$.
	\smallskip
	
	\item Multiplication distributes over addition.
	\smallskip

\end{itemize}

Let $S$ be a commutative semiring. For any $a,b \in S$, we write $a b$ instead of $a \cdot b$. Notice that for all $a \in S$, the equality $0a = 0$ follows from the cancellativity of addition and the distributive property. As all the semirings we are interested in here are commutative, from now on we will use the single term \emph{semiring}, assuming without explicit mention the commutativity of both addition and multiplication. A subset $S'$ of a semiring $S$ is a \emph{subsemiring} of~$S$ if $(S',+)$ is a submonoid of $(S,+)$ that contains~$1$ and is closed under multiplication. Note that every subsemiring of $S$ is a semiring.

\begin{definition}
	A semiring $S$ is called a \emph{semidomain} if $S$ is a subsemiring of an integral domain. A semiring $S$ is called a \emph{positive semidomain} if $S$ is a subsemiring of the nonnegative real numbers.
\end{definition}

For the rest of this subsection, assume that $S$ is a semidomain. We let $S^\bullet$ denote the multiplicative monoid $(S \setminus \{0\}, \cdot)$, and we call $S^\bullet$ the \emph{multiplicative monoid} of $S$.  Following terminology and notation from ring theory, we refer to the units of the multiplicative monoid $S^\bullet$ simply as \emph{units} of $S$ and, to avoid ambiguities, we will refer to the units of $(S,+)$ as \emph{invertible elements} of $S$. Also, with the ring theory notation in mind, we let $S^\times$ denote the group of units of $S$, letting $\uu(S)$ denote the additive group of invertible elements of $S$. To simplify notation, we write $\mathcal{A}(S)$ instead of $\mathcal{A}(S^\bullet)$ for the set of irreducibles of the multiplicative monoid $S^\bullet$. The semidomain $S$ is called \emph{bi-atomic} if both monoids $(S,+)$ and $S^\bullet$ are atomic.

To assist in the study of the atomic structure of semidomains, we provide the following strengthening of~\cite[Proposition~3.1]{BCG21}.

\begin{prop}\label{prop:semdom_mult_closed_atom}
    In a positive semidomain $S$, the set of additive atoms is
    multiplicatively divisor-closed.
\end{prop}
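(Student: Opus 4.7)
The plan is to prove divisor-closure directly, exploiting two structural features of a positive semidomain $S$: its additive unit group is $\{0\}$, and multiplication in $S$ never produces zero from nonzero factors. Fix $a \in \mathcal{A}((S,+))$ and suppose $b$ divides $a$ in the multiplicative monoid $S^\bullet$, so that $a = bc$ for some $c \in S^\bullet$. The goal is to show $b \in \mathcal{A}((S,+))$.

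The first step I would take is to record that $\mathcal{U}((S,+)) = \{0\}$: if $s \in S$ has an additive inverse $-s \in S$, then both lie in $\rr_{\ge 0}$, forcing $s = 0$. Since $b \in S^\bullet$ is nonzero, this already implies $b \notin \mathcal{U}((S,+))$, so the task reduces to ruling out any nontrivial additive decomposition of $b$ into non-units.

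For the second step, I would argue by contradiction: assume $b = b_1 + b_2$ for some $b_1, b_2 \in S \setminus \{0\}$. Multiplying through by $c$ and distributing gives $a = b_1 c + b_2 c$. Because $S$ embeds in an integral domain and $b_1, b_2, c$ are all nonzero, the products $b_1 c$ and $b_2 c$ are both nonzero, hence both lie outside $\mathcal{U}((S,+)) = \{0\}$. This exhibits $a$ as a sum of two additive non-units, contradicting $a \in \mathcal{A}((S,+))$. Hence no such decomposition of $b$ exists, and $b$ is an additive atom.

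There is no serious obstacle in this argument; the proof rests entirely on positivity (which collapses $\mathcal{U}((S,+))$ to $\{0\}$) and on the absence of zero divisors inherited from the ambient integral domain. The same reasoning would break down for a general (non-positive) semidomain, where nonzero elements could be additively invertible and the additive decomposition obtained above might fail to consist of non-units — which is presumably the feature that makes positivity the right hypothesis for strengthening \cite[Proposition~3.1]{BCG21}.
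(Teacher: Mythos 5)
Your proof is correct and follows essentially the same argument as the paper's: write $b$ as a sum of two additive non-units, multiply by the cofactor $c$, distribute, and contradict the atomicity of $a$ using positivity. You are merely a bit more explicit than the paper in justifying that $b_1c$ and $b_2c$ are non-units (nonzero, hence not in $\mathcal{U}((S,+)) = \{0\}$), which is a welcome clarification but not a different route.
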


\begin{proof}
    Let $a$ be an additive atom of $S$, and suppose for the sake of
    contradiction that there exists a $b \in S$ such that $b \mid_{S^\bullet} a$
    and $b$ is not an additive atom of $S$; let $a = bc$.
    Then we may write $b = x + y$ for some $x, y \in S^\bullet$.
    But then $a = xc + yc$, and since $S$ is positive, $xc$ and $yc$
    are not invertible elements. This contradicts that $a$ is an additive atom.
\end{proof}

It follows that, if there is at least one additive atom in $S$, then
all elements of $S^\times$ are additive atoms, so
that~\cite[Proposition~3.1]{BCG21} follows from
Proposition~\ref{prop:semdom_mult_closed_atom}.

\medskip

\subsection{Pure Irreducibles}
We begin our investigation of pure irreducibility and length-factoriality in semidomains by giving the following generalization of~\cite[Theorem~6.4]{CCGS21}, which places a strong restriction on the pure irreducibles of a semidomain. The proof follows the idea of the proof given in~\cite[Theorem~6.4]{CCGS21}, but with subtraction replaced by addition and some further necessary modifications.

\begin{theorem}\label{thm:sem_no_pls}
    Let $S$ be an atomic semidomain. Then either $\mathcal{L}(S) = \emptyset$ or $\mathcal{S}(S) = \emptyset$.
\end{theorem}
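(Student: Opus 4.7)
The plan is to argue by contradiction in the spirit of the proof of \cite[Theorem~6.4]{CCGS21}, but replacing the subtraction step of that argument with an additive manipulation that is available in any semidomain. Suppose that both $\mathcal{L}(S)$ and $\mathcal{S}(S)$ are nonempty, and fix $a \in \mathcal{L}(S)$ and $b \in \mathcal{S}(S)$. By Proposition~\ref{prop:pure_in_unbalanced}, $a$ and $b$ appear in every irredundant unbalanced factorization relation of $S^\bullet$, with $a$ always on the longer side and $b$ always on the shorter side. Using Proposition~\ref{prop:master_relation}, I would fix an unbalanced master factorization relation and write it in the form
\[
a \cdot a_2 \cdots a_n \, = \, b \cdot b_2 \cdots b_m,
\]
where $n > m$, the atoms $a, a_2, \dots, a_n, b, b_2, \dots, b_m$ are pairwise non-associate (irredundancy), and every other irredundant unbalanced relation is, up to swapping sides, a positive integer multiple of this one.

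The first main step is the additive identity obtained by adding $a \cdot b_2 \cdots b_m$ to both sides and factoring, namely
\[
a \cdot \bigl(a_2 \cdots a_n + b_2 \cdots b_m\bigr) \, = \, (a + b) \cdot b_2 \cdots b_m,
\]
an equality that holds in $S$ (both operands belong to $S$) and therefore in $S^\bullet$. Writing $P := a_2 \cdots a_n + b_2 \cdots b_m$ and factoring $P$ and $a+b$ into atoms of $S^\bullet$ via the atomicity assumption, I obtain two factorizations of the same element of $S^\bullet$. Removing the atoms that appear in both sides yields an irredundant factorization relation $(w_1, w_2)$ of some non-unit element of $S^\bullet$. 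The symmetric identity
\[
b \cdot \bigl(a_2 \cdots a_n + b_2 \cdots b_m\bigr) \, = \, (a + b) \cdot a_2 \cdots a_n,
\]
obtained analogously, supplies a second such relation $(w_1', w_2')$, and I would use whichever of the two leads more cleanly to the contradiction.

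The heart of the argument is a length count on $(w_1, w_2)$. The left-hand side contributes the atom $a$ (with some multiplicity inherited from the factorization of $P$), while the right-hand side contributes the atoms $b_2, \ldots, b_m$ together with those produced by an atomic factorization of $a+b$. Using the original irredundancy and the purity hypotheses on $a$ and $b$—in particular that any occurrence of $a$ on the right must come from an atomic factor of $a+b$, and similarly for $b$ on the left—I would show that after canceling shared atoms the resulting relation is unbalanced and contains \emph{either} $a$ but not $b$ \emph{or} $b$ but not $a$. Either conclusion contradicts Proposition~\ref{prop:pure_in_unbalanced}, completing the proof.

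The main obstacle is keeping tight control over the atomic factorizations of the ``sum'' terms $a+b$ and $P$: a priori, $a$ could divide $a+b$ in $S^\bullet$ (and analogously for $b$ and $P$), which would re-introduce $a$ into the right-hand side and sabotage the irredundancy we need. The necessary modification beyond the integral domain case is a careful case analysis showing that $a \nmid_{S^\bullet}(a+b)$ and $b \nmid_{S^\bullet} P$ (else, lifting to the ambient integral domain and using that the original relation is master, one produces a divisibility relation among $a, b, a_i, b_j$ that violates the pairwise non-associativity of these atoms). Once those divisibilities are ruled out, the length/irredundancy bookkeeping above proceeds and delivers the contradiction.
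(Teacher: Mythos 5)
Your overall strategy---argue by contradiction and replace the subtraction step of \cite[Theorem~6.4]{CCGS21} with an additive identity such as $a(a_2\cdots a_n + b_2\cdots b_m) = (a+b)\,b_2\cdots b_m$---is the same idea the paper uses. However, there is a genuine gap at the very first step: you anchor the whole argument on an unbalanced \emph{master} factorization relation obtained from Proposition~\ref{prop:master_relation}. That proposition says an atomic monoid admits an unbalanced master factorization relation if and only if it is a proper LFM, and $S^\bullet$ is only assumed atomic here, not length-factorial. So no master relation need exist, and everything you build on it collapses: in particular your claim that ``every other irredundant unbalanced relation is a positive integer multiple of this one,'' and your proposed mechanism for excluding the bad divisibilities $a \mid_{S^\bullet} (a+b)$ and $b \mid_{S^\bullet} P$ (``lifting to the ambient domain and using that the original relation is master''), both evaporate. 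The paper instead gets its starting relation purely from Proposition~\ref{prop:pure_in_unbalanced} together with the finiteness of $\mathcal{L}(S)$ and $\mathcal{S}(S)$ (from \cite[Corollary~4.4]{CCGS21}): there is \emph{some} irredundant unbalanced relation, and every pure atom must appear in it with positive exponent.

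The second, related gap is that the ``length/irredundancy bookkeeping'' you defer is exactly where the substance of the proof lies, and your sketch gives no workable route through it. The paper resolves it by a case split on $|\mathcal{L}(S)|$ and $|\mathcal{S}(S)|$: when both are at least $2$, a second pure atom ($\alpha_2$ or $\beta_2$) that fails to appear in a derived unbalanced relation supplies the contradiction via Proposition~\ref{prop:pure_in_unbalanced}; when one of them is a singleton, that contradiction is unavailable and one must instead impose a minimality condition on the exponent of $\alpha_1$ across all unbalanced relations and use it to force $\alpha_1$ into the factorization of the sum term, yielding $\alpha_1 \mid \beta_1^{b_1}$ (or $\alpha_1 \mid \alpha_1 + \beta_1$), which is absurd. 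You would also need to verify that the sum terms are nonzero before passing to $S^\bullet$ (additive inverses can exist in a semidomain); the paper checks this in each case by showing that $\alpha_1^{a_1} + \beta_1^{b_1} = 0$ would produce an unbalanced relation $(\alpha_1^{2a_1}, \beta_1^{2b_1})$ omitting some pure atom. As written, your proposal identifies the right first move but does not contain the ideas needed to finish.
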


\begin{proof}
    Suppose for the sake of contradiction that $S$ is a semidomain satisfying the PLS property. By~\cite[Corollary~4.4]{CCGS21}, both $\mathcal{L}(S)$ and $\mathcal{S}(S)$ are finite sets. Let $\mathcal{L}(S) = \{ \alpha_i \}_{i = 1}^\ell$ and $\mathcal{S}(S) = \{ \beta_i \}_{i = 1}^s$. By Proposition~\ref{prop:pure_in_unbalanced}, each $\alpha_i$ and $\beta_i$ will appear in any unbalanced factorization relation of $S$, so there exists an irredundant unbalanced factorization relation \begin{equation*}
        \left( z \prod\limits_{i = 1}^\ell \alpha_i^{a_i}, z' \prod\limits_{i = 1}^s \beta_i^{b_i} \right),
    \end{equation*}
    where $a_i$ and $b_j$ are positive integers for all $(i,j) \in \ldb 1,\ell \rdb \times \ldb 1, s \rdb$, while $z$ and $z'$ are arbitrary elements of $\mathsf{Z}(S)$ such that none of the $\alpha_i$ or $\beta_i$ appear in $z$ or $z'$. We now consider the following three cases.
    
    \textit{Case 1:} $\ell, s \geq 2$. We consider the element \begin{equation*}
        x_1 := \pi(z) (\alpha_1^{a_1} + \beta_1^{b_1}) \prod\limits_{i = 2}^\ell \alpha_i^{a_i}
    \end{equation*}
    of $S$. To check that $x_1$ is nonzero, it suffices to check that $\alpha_1^{a_1} + \beta_1^{b_1}$ is nonzero. Suppose that $\alpha_1^{a_1} + \beta_1^{b_1} = 0$. Then $\alpha_1^{2a_1} = \beta_1^{2b_1}$, so $(\alpha_1^{2a_1}, \beta_1^{2b_1})$ is an unbalanced factorization relation of $S$. This forces $\alpha_2$ to appear in the left component, which is a contradiction. Therefore $x_1$ is nonzero.
    
    By definition, $\beta_1$ divides $x_1$. Therefore there exist $w_1 \in \mathsf{Z}(S)$ and $w_2 \in \mathsf{Z}(\alpha_1^{a_1} + \beta_1^{b_1})$ such that $\beta_1 w_1 \in \mathsf{Z} (x_1)$ and $w := w_2 z \prod_{i = 2}^\ell \alpha_i^{a_i} \in \mathsf{Z}(x_1)$. We now consider the factorization relation $(\beta_1 w_1, w)$.
    
    \textit{Case 1.1:} $\beta_1$ does not appear in $w$. Because $\beta_1 \in \mathcal{S}(S)$, the factorization relation is unbalanced, and so $\alpha_1$ must appear in $w$, and therefore in $w_2$. But this implies that $\alpha_1$ divides $\beta_1^{b_1}$, giving a factorization relation $(\alpha_1 w_1', \beta_1^{b_1})$ for some $w_1' \in \mathsf{Z}(S)$. Since $\alpha_1$ is present in the left component, but not the right component, this is an unbalanced factorization relation. But $\beta_2$ is not present in the right component, which contradicts that $\beta_2 \in \mathcal{S}(S)$.
    
    \textit{Case 1.2}: $\beta_1$ appears in $w$. Because $\beta_1$ does not appear in $z$, it must appear in $w_2$. So $\beta_1$ divides $\alpha_1^{a_1}$, giving a contradiction analogous to the one obtained in Case 1.1.
    
    \textit{Case 2:} $\{\ell, s \} = \{ 1, n \}$ for some $n \geq 2$. We assume that $\ell = 1$; the other case is similar. We impose the condition that $a_1$ is the minimal exponent of $\alpha_1$ among all unbalanced factorization relations. We consider the element \begin{equation*}
        x_2 := \pi(z) \alpha_1^{a_1 - 1} (\alpha_1 + \beta_1^{b_1})
    \end{equation*}
    of $S$. As in Case 1, the element $x_2$ must be nonzero, as otherwise $(\alpha_1^2, \beta_1^{2b_1})$ is an unbalanced factorization relation in which not every pure irreducible appears. Furthermore, we have $\beta_1 \mid x_2$ by definition, because $\ell = 1$.
    
    As in Case 1, there exist $w_1 \in \mathsf{Z}(S)$ and $w_2 \in \mathsf{Z}(\alpha_1 + \beta_1^{b_1})$ giving a factorization relation $(\beta_1 w_1, z \alpha_1^{a_1 - 1} w_2)$. Because $\beta_1$ does not divide $\alpha_1 + \beta_1^{b_1}$, it does not appear in $z \alpha_1^{a_1 - 1} w_2$, so the factorization relation is unbalanced. By the minimality of $a_1$, the irreducible $\alpha_1$ must appear in $w_2$. But then $\alpha_1$ divides $\beta_1^{b_1}$, giving a contradiction as in Case 1.1.
    
    \textit{Case 3:} $\ell = s = 1$. We again impose the minimality condition of Case 2 on $a_1$. We consider the element \begin{equation*}
        x_3 := \pi(z) \alpha_1^{a_1 - 1} (\alpha_1 + \beta_1)
    \end{equation*}
    of $S$. If $\alpha_1 + \beta_1 = 0$, then $(\alpha_1^2, \beta_1^2)$ is a balanced factorization relation involving pure irreducibles, which is a contradiction. So $x_3$ is nonzero.
    
    As in the previous cases, there exist $w_1 \in \mathsf{Z}(S)$ and $w_2 \in \mathsf{Z} (\alpha_1 + \beta_1)$ giving a factorization relation $(\beta_1 w_1, z \alpha_1^{a_1 - 1} w_2)$. Because $\beta_1$ does not divide $\alpha_1 + \beta_1$, it does not appear in $z \alpha_1^{a_1 - 1} w_2$, so the factorization relation is unbalanced. By the minimality of $a_1$, the irreducible $\alpha_1$ must appear in $w_2$. But this is a contradiction, as $\alpha_1$ does not divide $\alpha_1 + \beta_1$.
\end{proof}

Because every proper LFM is a PLSM by~\cite[Corollary~4.6]{CCGS21}, we obtain the following corollary.

\begin{cor}
    The multiplicative monoid of a semidomain $S$ is an LFM if and only if it is a UFM.
    \label{thm:lfs_ufs}
\end{cor}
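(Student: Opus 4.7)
The plan is to deduce this as a short combination of two results: Theorem~\ref{thm:sem_no_pls} proved immediately above, and the cited \cite[Corollary~4.6]{CCGS21}, which asserts that every proper LFM (i.e., an LFM that is not a UFM) is a PLSM. No new construction or calculation seems to be required, so the proof should amount to one or two lines.

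First I would dispatch the easy direction: if $S^\bullet$ is a UFM, then $|\mathsf{Z}(x)| = 1$ for every $x \in S^\bullet$, so trivially $|z_1| = |z_2|$ forces $z_1 = z_2$, and $S^\bullet$ is an LFM. This uses nothing beyond the definitions in Section~\ref{sec:background}.

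For the nontrivial direction, I would argue by contradiction. Assume $S^\bullet$ is an LFM that fails to be a UFM, that is, a proper LFM. By \cite[Corollary~4.6]{CCGS21}, every proper LFM satisfies the PLS property, so both $\mathcal{L}(S) = \mathcal{L}(S^\bullet)$ and $\mathcal{S}(S) = \mathcal{S}(S^\bullet)$ are nonempty. But Theorem~\ref{thm:sem_no_pls} asserts that for an atomic semidomain at least one of these two sets must be empty, and $S^\bullet$ is atomic because it is an LFM. This contradicts the assumption, so $S^\bullet$ must be a UFM.

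There is no real obstacle here; the only thing to be mildly careful about is to verify that the hypotheses of the two cited results apply, namely that $S^\bullet$ is atomic (built into the definition of LFM) and that the sets $\mathcal{L}(S)$ and $\mathcal{S}(S)$ as defined at the end of Section~\ref{sec:background} for $S^\bullet$ coincide with the sets to which Theorem~\ref{thm:sem_no_pls} refers, which they do by definition. The corollary then follows immediately.
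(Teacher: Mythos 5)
Your proposal is correct and follows exactly the paper's route: the paper derives this corollary in one line by combining Theorem~\ref{thm:sem_no_pls} with the fact from \cite[Corollary~4.6]{CCGS21} that every proper LFM is a PLSM. Your added verification of the trivial direction and of the atomicity hypothesis is fine but does not change the argument.
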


\medskip

\subsection{Restrictions on Bi-LFSs}
In this section, we discuss an open problem posed in~\cite{BCG21}. To do so, we need the following definition. The semidomain $S$ is called a \emph{bi-LFS} (resp., \emph{bi-UFS}) provided that both $(S,+)$ and $S^\bullet$ are LFMs (resp., UFMs). The properties of being bi-LFS and being bi-UFS have been studied recently by Baeth et al. in~\cite{BCG21} and by Gotti and Polo in~\cite{GP22}. Observe that $\nn_0$ is a bi-UFS and, therefore, a bi-LFS. It turns out that this is the only positive semidomain that is known to be a bi-LFS. This brings us to the following question, first stated in~\cite{BCG21}. 

\begin{question} \cite[Question~7.8]{BCG21} \label{quest:main question}
	Is $\nn_0$ the only positive subsemidomain of $\rr$ that is a bi-LFS?
\end{question}

Corollary~\ref{thm:lfs_ufs} suggests the following conjecture, which, if proved, greatly restricts the possibilities for counterexamples to Question~\ref{quest:main question}.

\begin{conj}
    A positive semidomain is a bi-LFS if and only if it is a bi-UFS.
    \label{conj:lfs_ufs}
\end{conj}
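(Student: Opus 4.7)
The forward direction is immediate: every UFM is an LFM, so every bi-UFS is a bi-LFS. For the converse, the plan is to suppose $S$ is a positive bi-LFS and deduce that it is a bi-UFS. Applying Corollary~\ref{thm:lfs_ufs} to the multiplicative monoid shows that $S^\bullet$ is already a UFM, so the remaining task is to prove $(S,+)$ is a UFM. Suppose for contradiction that $(S,+)$ is a proper LFM. By Proposition~\ref{prop:master_relation}, $(S,+)$ admits an unbalanced master factorization relation $(z_1,z_2)$, unique up to swap and positive integer scaling, and by~\cite[Corollary~4.6]{CCGS21} both $\mathcal{L}((S,+))$ and $\mathcal{S}((S,+))$ are nonempty.

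The first concrete step is to show that $S^\times = \{1\}$. For any $u \in S^\times$ and any additive atom $\alpha$, writing $u\alpha = x + y$ in $S$ gives $\alpha = u^{-1}x + u^{-1}y$ in $S$; positivity forces one summand to vanish, so $u\alpha$ is again an additive atom. Multiplying $(z_1,z_2)$ by $u$ therefore produces an irredundant unbalanced factorization relation of the same lengths as $(z_1,z_2)$, which by the uniqueness clause of Proposition~\ref{prop:master_relation} must coincide with $(z_1,z_2)$ itself (a length comparison excludes the swapped case). Hence $u$ permutes the atom multiset of $z_1$ on the nose; a finite-order argument combined with the absence of nontrivial roots of unity in $\mathbb{R}_{\geq 0}$ then yields $u = 1$.

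Exploiting both structures simultaneously, Proposition~\ref{prop:semdom_mult_closed_atom} implies that every multiplicative atom dividing a pure additive atom is itself an additive atom, so the unique multiplicative factorization in $S^\bullet$ of each pure additive atom is already an expression as a product of additive atoms. My plan would then be to multiply the master relation by a cleverly chosen $\delta \in S^\bullet$---for instance a multiplicative atom dividing $\gamma := \pi(z_1)$, or a pure additive atom $\alpha_1$ or $\beta_1$ itself---refactor each product $\delta\alpha_i$ and $\delta\beta_j$ into a sum of additive atoms, and present the resulting equality as a new irredundant factorization relation. If this relation is unbalanced, uniqueness of the master relation forces it to be a positive integer multiple of $(z_1,z_2)$ or its swap, which combined with Proposition~\ref{prop:pure_in_unbalanced} (requiring every pure atom to appear in every irredundant unbalanced relation) should eventually produce a multiset incompatibility and hence the desired contradiction.

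I expect the main obstacle to be this last step: the refactoring of each non-atom $\delta\alpha_i$ into additive atoms is not canonical, so controlling the total length and support of the new relation demands a delicate case analysis on $|\mathcal{L}((S,+))|$ and $|\mathcal{S}((S,+))|$ in the spirit of the three-case proof of Theorem~\ref{thm:sem_no_pls}. That this conjecture has remained open suggests that bridging the additive master relation with the multiplicative UFM structure will likely require a genuinely new combinatorial identity, or perhaps a deeper use of the real-analytic structure of $S \subseteq \mathbb{R}_{\geq 0}$---for example, by comparing archimedean magnitudes of pure additive atoms against their multiplicative factorizations.
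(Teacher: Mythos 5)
The statement you are trying to prove is stated in the paper as a \emph{conjecture}: the authors offer no proof, and they explicitly present it as an open problem whose resolution would constrain the possibilities for answering Question~\ref{quest:main question}. So there is no paper proof to compare against, and your attempt must be judged on its own terms --- and on those terms it is not a proof. Your forward direction is fine, and your reduction is correct: Corollary~\ref{thm:lfs_ufs} already gives that $S^\bullet$ is a UFM, so everything hinges on showing that $(S,+)$ cannot be a proper LFM. Your argument that $S^\times = \{1\}$ is also sound and is a genuine (if modest) partial result: multiplication by a unit $u$ permutes additive atoms, carries the master relation $(z_1,z_2)$ to an irredundant unbalanced relation of the same lengths, hence fixes it by Proposition~\ref{prop:master_relation}, and the product of the atoms of $z_1$ then forces $u^{|z_1|}=1$, so $u=1$ in $\mathbb{R}_{>0}$.

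The gap is everything after that. The entire content of the conjecture is the derivation of a contradiction from the existence of an additive master relation in a positive semidomain with multiplicative unique factorization, and your text replaces that derivation with a plan (``multiply by a cleverly chosen $\delta$, refactor, obtain a multiset incompatibility'') followed by an acknowledgment that the plan has an unresolved obstacle: the refactorings of $\delta\alpha_i$ into additive atoms are not canonical, so you cannot control the length or support of the resulting relation, and Proposition~\ref{prop:pure_in_unbalanced} alone does not produce the incompatibility you need. This is precisely the point where the analogy with Theorem~\ref{thm:sem_no_pls} breaks down: there the pure atoms live in the \emph{multiplicative} monoid and the additive structure supplies the auxiliary element $\alpha_1^{a_1}+\beta_1^{b_1}$, whereas here the roles are reversed and no symmetric device is available. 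Until that step is supplied, what you have is a correct reduction of the conjecture to the statement ``a positive semidomain that is a bi-LFS has additive monoid a UFM,'' together with the observation that its unit group is trivial --- not a proof.
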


The following construction, called \emph{exponentiation of positive monoids}, seems to have first been used in~\cite{BG20}. The semidomain resulting from that construction has been crucial to give desired counterexamples to understand the atomic structure of semidomains (see \cite{BCG21} and \cite{GP22}).
\medskip

\noindent \textbf{Exponentiation of Positive Monoids:} Given a positive monoid $M$, we can use the Lindemann-Weierstrass Theorem from transcendental number theory (see \cite[Chapter~1]{aB90}) to create a positive semidomain $S$ of the real line that has a copy of $M$ as a multiplicative divisor-closed submonoid. The Lindemann-Weierstrass Theorem states that if $\alpha_1, \dots, \alpha_n$ are distinct algebraic numbers, then the set $\{e^{\alpha_1}, \dots, e^{\alpha_n} \}$ is linearly independent over the algebraic numbers. In particular, if $M$ is a positive monoid consisting of algebraic numbers, then the additive monoid
\[
	E(M) := \langle e^m \mid m \in M \rangle
\]
is free on the set $\{e^m \mid m \in M\}$. In addition, one can readily see that $E(M)$ is closed under the standard multiplication and, therefore, is a positive semidomain. Observe that the multiplicative submonoid
\[
	e(M) := \{e^m \mid m \in M\}
\]
of $E(M)^\bullet$ is isomorphic to the monoid~$M$. It turns out that $e(M)$ is divisor-closed in $E(M)^\bullet$ \cite[Lemma~2.7]{BCG21} and, therefore, atoms and factorizations in the smaller monoid $e(M)$ are preserved in the larger monoid $E(M)^\bullet$.
\smallskip

Puiseux monoids, in tandem with the exponentiation construction, have been a recurrent source of counterexamples in the context of positive semidomains (see \cite{BCG21,BG20,GP22}). We would like to finish this subsection by emphasizing that any potential counterexample answering Question~\ref{quest:main question} formed by exponentiating a Puiseux monoid will have precisely two generators.

\begin{prop}\label{prop:exp_bi_lfs}
	Let $M$ be a Puiseux monoid. If $E(M)$ is a bi-LFS, then $M \cong \langle a,b \rangle$, where $a$ and $b$ are distinct positive integers with $\gcd(a,b) = 1$.
\end{prop}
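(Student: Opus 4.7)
The plan is to invoke Corollary~\ref{thm:lfs_ufs} to promote the multiplicative length-factoriality of $E(M)$ to unique factorization, transfer this property back to $M$ through the divisor-closed multiplicative copy $e(M) \subseteq E(M)^\bullet$, and then use the rank-one structure of Puiseux monoids to pin $M$ down.

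First, since $E(M)$ is a bi-LFS, the multiplicative monoid $E(M)^\bullet$ is an LFM; because $E(M)$ is a semidomain, Corollary~\ref{thm:lfs_ufs} upgrades this at once to $E(M)^\bullet$ being a UFM. This is the step that makes the length-factoriality hypothesis do real work. Next, by the cited Lemma~2.7 of~\cite{BCG21}, the multiplicative submonoid $e(M) = \{e^m \mid m \in M\}$ is divisor-closed in $E(M)^\bullet$. It is standard that a divisor-closed submonoid $Q$ of a UFM $P$ is itself a UFM: for any $x \in Q$, its unique factorization $x = a_1 \cdots a_k$ in $P$ has each $a_i$ dividing $x$, so each $a_i$ lies in $Q$ by divisor-closedness; these $a_i$ remain atoms in $Q$, and uniqueness carries over. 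Applying this with $Q = e(M)$ and using the isomorphism $m \mapsto e^m$ from $(M,+)$ to $(e(M), \cdot)$, we conclude that $M$ itself is a UFM.

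Finally, I would show that a Puiseux UFM has at most one atom. Indeed, if $a$ and $a'$ were two distinct positive rational atoms of $M$ and $a'/a = p/q$ in lowest terms, then $qa = pa'$ would give the same element two distinct factorizations ($q$ copies of $a$ versus $p$ copies of $a'$), contradicting unique factorization. Thus a nontrivial Puiseux UFM is isomorphic to $\nn_0 = \langle 1, 2 \rangle$, which has the form $\langle a,b\rangle$ with $a=1$ and $b=2$ distinct coprime positive integers, as required. The main obstacle is the very first step---passing from LFM to UFM on $E(M)^\bullet$---which is exactly Corollary~\ref{thm:lfs_ufs}, so the argument is quite short once that corollary is in hand; a minor caveat is that the trivial Puiseux monoid $M=\{0\}$ formally satisfies the hypothesis ($E(\{0\})=\nn_0$ is a bi-LFS) but not the conclusion, so the statement tacitly assumes $M\neq\{0\}$.
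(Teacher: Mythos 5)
Your deductions are individually sound given Corollary~\ref{thm:lfs_ufs}, but the argument does not land where the proposition intends, and the discrepancy matters. You conclude that $M$ is a UFM, hence $M \cong \nn_0$ (or $M = \{0\}$), and you then read $\nn_0$ as $\langle 1,2\rangle$ to match the stated conclusion. The paper's own proof does the opposite: its Case~1 \emph{rules out} $M \cong \nn_0$ by exhibiting the balanced relation $(e+1)(e^3+e^2+e+6) = (e+2)(e^3+2e+3)$ between pairwise non-associate irreducibles of $\nn_0[e] \cong E(\nn_0)$, and its Case~2 invokes \cite[Proposition~4.3]{fG20b} to exclude $|\mathcal{A}(M)| \ge 3$, leaving exactly the numerical monoids with two atoms $a, b \ge 2$. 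So the intended conclusion is that $M$ is \emph{minimally} generated by two coprime integers --- a class disjoint from $\{\nn_0\}$ --- and the surrounding text (``will have precisely two generators'') confirms this. Interpreting $\langle a,b\rangle$ as allowing $\langle 1,2\rangle = \nn_0$ exploits a loophole in the phrasing rather than proving the statement the authors mean.

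More substantively, your route overshoots in a way that exposes a real tension. If Corollary~\ref{thm:lfs_ufs} may be applied as you do, then $E(M)^\bullet$ being an LFM forces it to be a UFM, hence forces the divisor-closed submonoid $e(M) \cong M$ to be a UFM; but no numerical monoid $\langle a,b\rangle$ with $a,b \ge 2$ is a UFM (the element $ab$ factors both as $b$ copies of $a$ and as $a$ copies of $b$), so the proposition's intended conclusion would itself be unattainable, and combined with the paper's Case~1 computation one would conclude that $E(M)$ is \emph{never} a bi-LFS for a nontrivial Puiseux monoid $M$. That is a far stronger claim, one that would settle the exponentiated-Puiseux case of Question~\ref{quest:main question} outright, and the authors plainly do not assert it. Either you have observed that Corollary~\ref{thm:lfs_ufs} trivializes Proposition~\ref{prop:exp_bi_lfs} --- in which case this should be stated explicitly rather than folded silently into a proof of the weaker statement --- or that corollary is not to be leaned on here. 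In neither case does your write-up reproduce the paper's result: the paper's proof deliberately proceeds without Corollary~\ref{thm:lfs_ufs}, via the two explicit cases above, and to match it you would need to rule out $M \cong \nn_0$ rather than conclude it.
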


\begin{proof}
	Set $S = E(M)$. Since $S$ is the result of the exponentiation construction, the positive monoid $(S,+)$ is a UFM. Suppose for the sake of contradiction that $M$ is not isomorphic to a numerical semigroup $\langle a, b \rangle$ for any distinct $a,b \in \nn$ with $\gcd(a,b) = 1$. We split the rest of the proof into the following two cases.
	\smallskip
	
	\noindent {\it Case 1:} $M$ is isomorphic to $\nn_0$. Then we can set $M = \nn_0$. In this case, the semidomains $S$ and $\nn_0[X]$ are isomorphic, and so are the multiplicative monoids $S^\bullet$ and $\nn_0[X]^\bullet$. Now notice that the polynomials $X^3 + 2X + 3$ and $X^3 + X^2 + X + 6$ are irreducible in $\nn_0[X]^\bullet$ as their unique factorizations in the UFD $\zz[X]$ are $X^3+2X+3 = (X+1)(X^2-X+3)$ and $X^3 + X^2 + X + 6 = (X+2)(X^2-X+3)$, respectively. Hence the elements $e+1$, $e+2$, $e^3+e^2+e+6$, and $e^3+2e+3$ of $S$ are irreducibles in $S^\bullet$. Also, the fact that $S$ is reduced implies that such irreducibles are pairwise non-associates. As a result, the equality $(e+1) (e^3+e^2+e+6) = (e+2) (e^3+2e+3)$ contradicts the fact that $S$ is a bi-LFS.
	\smallskip
	
	\noindent {\it Case 2:} $M$ is not isomorphic to $\nn_0$.  Since $M$ is isomorphic to a multiplicative divisor-closed submonoid of $S^\bullet$, the fact that $S$ is a bi-LFS ensures that $M$ is atomic. Since $M$ is not isomorphic to $\nn_0$, we see that $|\mathcal{A}(M)| \ge 2$. If $|\mathcal{A}(M)| = 2$, then $M$ is isomorphic to a numerical monoid $\langle a,b \rangle$, which is not possible by assumption. Hence $|\mathcal{A}(M)| \ge 3$. Then \cite[Proposition~4.3]{fG20b} guarantees that $M$ is not an LFM, and so the divisor-closed submonoid $e(M)$ of $S^\bullet$ is not a LFM. However, this contradicts that $S$ is a bi-LFS.
\end{proof}
\bigskip

\medskip
\section*{Acknowledgments}

We thank our mentor, Dr.~Felix Gotti, for guiding our research project and making valuable suggestions, and the MIT PRIMES organizers for making our research possible.
We would also like to thank the Reviewer for their thoughtful comments and suggestions.

\bigskip

\end{document}